\newtheorem{theorem}{Theorem}[section]
\newtheorem{lemma}[theorem]{Lemma}
\theoremstyle{definition}
\newtheorem{definition}[theorem]{Definition}
\newtheorem{example}[theorem]{Example}
\newtheorem{prop}[theorem]{Proposition}
\newtheorem{mainthm}{Theorem}
\theoremstyle{remark}
\newtheorem{remark}[theorem]{Remark}
\numberwithin{equation}{section}
\newcommand*{\spf}{\mathop{\textrm{Spf}}}
\newcommand*{\Sub}{\mathop{\textrm{Sub}}}
\begin{document}

\title{Power Operations on $K(n-1)$-Localized Morava $E$-theory at Height $n$}


\author{Yifan Wu}
\address{Southern University of Science \& Technology, Shenzhen, China}
\curraddr{Department of Mathematics}
\email{12131236@mail.sustech.edu.cn}
\thanks{}


\subjclass[2010]{Primary }

\date{\today}

\dedicatory{}

\keywords{Chromatic homotopy theory, Power operations, Elliptic curves}

\begin{abstract}
    We calculate the $K(n-1)$-localized $E_n$ theory for symmetric groups, and deduce a modular interpretation of the total power operation $\psi^p_F$ on $F=L_{K(n-1)}E_n$ in terms of augmented deformations of formal groups and their subgroups. We compute the Dyer-Lashof algebra structure over $K(n-1)$-local $E_n$-algebra. Then we specify our calculation to the $n=2$ case. We calculate an explicit formula for $\psi^p_F$ using the formula of $\psi^p_E$, and explain connections between these computations and elliptic curves, modular forms and $p$-divisible groups.
\end{abstract}

\maketitle
\tableofcontents

\bibliographystyle{alpha}
\section{Introduction}
Cohomology operations are crucial in algebraic topology. It equips cohomology rings with more richer algebraic structures. Many non-trivial results have roots in these operations. For instance, the \textit{Steenrod mod-p operations} in mod$-p$ cohomology, the \textit{Adams operations} in the topological $K-$theory are related directly to the \textit{Hopf invariant one} problem\cite{adamshopf,khopf} and the image of the stable $J$-homomorphism\cite{ADAMS196621}. These operations are examples of \textit{power operations}.
\par
Let $E$ be a multiplicative cohomology theory, or equivalently, a homotopy commutative ring spectrum. Suppose $E$ admits an $E_\infty$ structure\cite[Chapter 1,2]{elmendorf1997rings}, i.e., its multiplication structure $E\wedge E\xrightarrow{\mu} E$ commutes not only up to homotopy, but up to higher homotopy coherence. In this case, one can define the \textit{total power operation}
\begin{equation*}
    P_n:E^0\longrightarrow E^0B\Sigma_n,
\end{equation*}
where $B\Sigma_n$ is the classifying space of the $n-$letters permutation groups. See Section \ref{s2.2} for detailed constructions. Some of the many important applications of power operations can be found in \cite{LawsonBP,barthel2024rationalization,balderrama2024total} etc.
\par
It has been long for algebraic topologists to look for correspondences of their topological objects in algebraic geometry since Quillen's work on the connection between complex oriented spectra and formal groups \cite{quillen1969formal}. \textit{Chromatic homotopy theory} provides a framework for systematically study this relationship.
\par
Over an algebraically closed field, formal groups are classified by heights. For each $p$ and height $n$, there is a spectrum $K(n)$, called Morava $K$-theory, which corresponds to the height $n$ formal group \cite{johnson1975bp}. There is also a Morava $E$-theory which parameterizes deformations of such a formal group. Morava $E$ theories admit essentially unique $E_\infty$ structure, hence admit power operations. Surprisingly, the total power operation over it has a strong connection with the moduli of subgroups of formal groups and subgroups of elliptic curves, and it has been well studied by Ando \cite{ando2001elliptic}, Hopkins \cite{ando2004sigma}, Strickland \cite{strickland1998morava} and Rezk\cite{rezk2009congruence},etc.
\par
For a spectrum $X$, the chromatic fracture square and the chromatic convergence theorem suggest that one can break $X$ into pieces, $L_{K(n)}X$ namely, lying in each chromatic layer and recover itself via patching all these pieces together, at least in good circumstance. \textit{Transchromatic homotopy theory} studies such chromatic layers. A fundamental and vital object in transchromatic homotopy theory is the $K(t)-$localized Morava $E-$theory $L_{K(t)}E_n$ for $t\leq n$.
\par
Various work has been devoted to the study of $L_{K(t)}E_n$. In \cite{stapleton2013transchromatic}, Stapleton constructed associated character theory over it using $p-$divisible groups. He and Schlank also gave a transchromatic proof of Strickland's theorem based on such characters and inertia groupoid functors \cite{SCHLANK20151415}. The spectrum $L_{K(t)}E_n$ itself is quite complicated. The coefficient ring $\pi_0L_{K(t)}E_n$ is obtained by first inverting $u_t$ in $$\pi_0E_n=W(k)[\![u_1,\dots,u_{n-1}]\!]$$ then partially completing with the ideal $(p,u_1,\dots,u_{t-1})$. This ring is known to be excellent \cite{barthel2018excellent}. Various different topology could be defined over it. From this point of view, Mazel-Gee, Peterson and Stapleton proposed a modular interpretation of $\pi_*L_{K(t)}E_n$ in terms of pipe rings and pipe formal groups \cite{mazel2015relative}. While when $t=n-1$, things are slightly easier, $\pi_0L_{K(n-1)}E_n$ is still a complete local Noetherian ring with an imperfect residue field $k(\!(u_{n-1})\!)$. Torii compared $L_{K(n-1)}E_n$ with $E_{n-1}$ by studying the associated formal groups \cite{torii2003degeneration,torii2010comparison} and Vankoughnett gave a modular interpretation of $\pi_0L_{K(n-1)}E_n$ using augmented deformations \cite{vankoughnett2021localizations}, which is basically deformations of formal groups together with a choice of the last Lubin-Tate coordinates. (See Section \ref{s2.3} for definitions.)
\par
Motivated by Vankoughnett's result, in this paper, we investigate the modular interpretation of the total power operation on the $K(n-1)$ -localized Morava $E$-theory $L_{K(n-1)}E_n$ at height $n$. 
\par
Let $F=L_{K(n-1)}E_n$, and $\mathbb{G}_F$ be the associated formal group. Let $\mathbb{G}_F^0$ be the fiber of $\mathbb{G}_F$ over the residue field $k(\!(u_{n-1})\!)$ of $F^0$. We showed that
\begin{mainthm}[Theorem \ref{t2.12}]\label{A}
    \textit{The ring $R_m=F^0B\Sigma_{p^m}/I$ classifies augmented deformations of $\mathbb{G}_F^0$ together with a degree $p^m$ subgroup, which means for any complete local Noetherian ring $R$, we have a bijection}
    \begin{equation*}
        {\rm Map}_{cts}(R_m,R)=\{(\mathbb{K},H)\}
    \end{equation*}
    \textit{between the set of continuous maps from $R_m$ to $R$ and the set of pairs consisting of an augmented deformation $\mathbb{K}$ of $\mathbb{G}_F^0$ over $R$ and a degree $p^m$ subgroup $H$ of $\mathbb{K}$. }
    \par
    \textit{Moreover, the additive total power operation}
    \begin{equation*}
        \psi^p_F:F^0\longrightarrow F^0B\Sigma_p/I
    \end{equation*}
    \textit{behaves like take the quotient, i.e.}
    \begin{align*}
        {\rm Map}_{cts}(R_m,R)&\xrightarrow{(\psi_F^p)^*} {\rm Map}_{cts}(F^0,R) \\
        (\mathbb{K},H) &\longmapsto \mathbb{K}/H.
    \end{align*}
\end{mainthm}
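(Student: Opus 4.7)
The plan is to reduce the statement to two established inputs: Strickland's theorem \cite{strickland1998morava}, which identifies $E_n^0 B\Sigma_{p^m}/I$ with the ring classifying deformations of $\mathbb{G}_{E_n}$ together with a degree $p^m$ subgroup, and Vankoughnett's modular interpretation of $F^0$ (recalled in Section \ref{s2.3}) in terms of augmented deformations of $\mathbb{G}_F^0$. The goal is then to show that these two modular pictures combine coherently after $K(n-1)$-localization.

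First, I would establish that cohomology commutes with this localization in the sense that
\[
F^0 B\Sigma_{p^m} \;\cong\; F^0 \,\widehat{\otimes}_{E_n^0}\, E_n^0 B\Sigma_{p^m},
\]
where the completion is taken with respect to the ideal defining the topology on $F^0$. The key input is that by Strickland, $E_n^0 B\Sigma_{p^m}/I$ is finite and free over $E_n^0$, so the base change is well-behaved; passing to the quotient by the transfer ideal $I$ on both sides yields $R_m \cong F^0 \otimes_{E_n^0} (E_n^0 B\Sigma_{p^m}/I)$. This step should use finiteness of the cohomology together with the description of $L_{K(n-1)}$ on $E_n$-modules as an appropriate localize-then-complete operation.

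Next, I would translate this base change geometrically. Strickland's theorem says that $\spf(E_n^0 B\Sigma_{p^m}/I)$ represents the moduli of pairs (deformation of $\mathbb{G}_{E_n}$, degree $p^m$ subgroup). Base-changing along $\spf F^0 \to \spf E_n^0$ and invoking Vankoughnett's identification of $\spf F^0$ with the moduli of augmented deformations of $\mathbb{G}_F^0$, the Yoneda lemma then produces the claimed classification: $R_m$ represents pairs $(\mathbb{K}, H)$ where $\mathbb{K}$ is an augmented deformation of $\mathbb{G}_F^0$ over $R$ and $H \subset \mathbb{K}$ a degree $p^m$ subgroup.

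For the statement about $\psi^p_F$, I would appeal to naturality of the total power operation under $K(n-1)$-localization: $\psi^p_F$ is induced from $\psi^p_{E_n}$ by localization, and the functor classifying pairs $(\mathbb{K}, H)$ is the base change of Strickland's functor, so it suffices to invoke the Ando--Hopkins--Strickland--Rezk theorem that $\psi^p_{E_n}$ corresponds under Strickland's isomorphism to the quotient morphism $(\mathbb{K}_E, H) \mapsto \mathbb{K}_E / H$. The principal obstacle I anticipate is verifying that the augmentation (the chosen Lubin--Tate coordinate playing the role of $u_{n-1}$) transports canonically along the isogeny $\mathbb{K} \to \mathbb{K}/H$, so that $\mathbb{K}/H$ inherits a natural augmentation matching the one predicted by the ring-theoretic computation. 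This compatibility between the augmentation datum and the formal-group quotient is where the delicate part of the proof should concentrate; once secured, everything else follows from base change applied to Strickland's picture.
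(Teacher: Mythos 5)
Your first step is where the argument breaks down, and it is precisely the failure that the paper flags in Remark 2.6: the base change formula $R_m \cong F^0\otimes_{E_n^0}(E_n^0B\Sigma_{p^m}/I)$ is false, and no completion can repair it, since $E_n^0B\Sigma_{p^m}/I$ is already finite free over $E_n^0$, so the (completed) tensor product is finite free over $F^0$ of rank $\overline{d}(m,n)$, whereas $R_m$ is finite free of rank $\overline{d}(m,n-1)$ (Theorem \ref{t2.1} and the proposition following it). Already for $B\mathbb{Z}/p$ one sees the discrepancy: $E_n^0B\mathbb{Z}/p=E_n^0[\![x]\!]/[p](x)$ has rank $p^n$, but over $F^0$ the element $u_{n-1}$ is a unit, so the Weierstrass degree of $[p](x)$ drops to $p^{n-1}$ and $F^0B\mathbb{Z}/p$ has rank $p^{n-1}$. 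The underlying reason is that $E^0\to F^0$ is not a continuous local map, equivalently that $L_{K(n-1)}$ does not commute with $F(B\Sigma_{k+},-)$ for the infinite complex $B\Sigma_k$; geometrically, $\spf F^0\times_{\spf E^0}\Sub_m(\mathbb{G}_E)$ classifies degree $p^m$ subgroups of the pulled-back $p$-divisible group, including those meeting the \'etale part, while $\Sub_m(\mathbb{G}_F)$ sees only the formal (connected) part, which has height $n-1$. Consequently your second step (base-changing Strickland's moduli description) and your third step (deducing the statement about $\psi^p_F$ by localizing the Ando--Hopkins--Strickland--Rezk theorem along this base change) do not go through as written.

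What the paper does instead is rerun the Strickland and Ando--Hopkins--Strickland arguments intrinsically over $F$: it first proves $F^*B\Sigma_k$ is even and free of rank $d(k,n-1)$ by inducting down the quotients $F_i=F/(p,u_1,\dots,u_{i-1})$ to the graded field $K_{u_{n-1}}$ and applying generalized character theory to $F$ itself (Section \ref{s2.1}); it then reproves the isomorphism $\spf(F^0B\Sigma_{p^m}/I)\cong\Sub_m(\mathbb{G}_F)$ by constructing the canonical map, matching ranks, and checking injectivity over the residue field $k(\!(u_{n-1})\!)$ (Proposition \ref{p2.5}); and it identifies the kernel of $\psi^*_{Y/F}$ with the universal degree $p$ subgroup via level structures for $\mathbb{G}_F$ (Proposition \ref{p2.8}), before combining with Vankoughnett's theorem. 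Your closing worry about transporting the augmentation along $\mathbb{K}\to\mathbb{K}/H$ is comparatively harmless, since the base ring $R$ and hence the structure map $\Lambda\to R$ are unchanged by the quotient; the real work is in establishing the height $n-1$ moduli description of $R_m$ without base change from $E$.
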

\par
Our result is an attempt to understand the full picture of power operations in chromatic setting by studying power operations on each associated $K(n)$-local monochromatic layer. The author also guesses that the algebraic information about the choice of the last Lubin-Tate coordinates has its own modular interpretation in terms of the \'etale part of $G_E$, when considered as a $p$-divisible group over $\pi_0L_{K(n-1)}E_n$.\par
Let $R$ be an $E_\infty$ ring. The homotopy group of an $R$-algebra $A$ possesses a module structure over the \textit{Dyer-Lashof} algebra ${\rm DL}_R$. The Dyer-Lashof algebra is a generalization of Steenrod algebra in generalized cohomology setting, which governs all homotopy operations.\par
Our second result concerns Dyer-Lashof algebra over $K(n-1)$-local $E_n$-algebras. Using Tate spectrum, we show the $L_{K(n-1)}E_n$ theory for symmetric groups is self-dual which leads to the following.
\begin{mainthm}[Proposition \ref{p2.13}]\label{B}
    The Dyer-Lashof algebra over $K(n-1)$-lcaol $E_n$-algebras is 
    \begin{equation*}
        {\rm DL}_{LE}=\bigoplus_{m\geq1} \widehat{F}_0B\Sigma_m
    \end{equation*}
    where $F=L_{K(n-1)}E_n$. Moreover we have
    \begin{equation*}
        \widehat{F}_0B\Sigma_m=F^0B\Sigma_m
    \end{equation*}
    and thus we have
    \begin{equation*}
        \widehat{F}_0B\Sigma_m={\rm Hom}_{F^0}(F^0B\Sigma_m,F^0)
    \end{equation*}
    is the dual of $F^0B\Sigma_m$. The \textit{primitives} in the left hand side correspond to \textit{indecomposables} in the right hand side. 
\end{mainthm}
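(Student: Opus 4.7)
The plan is to decompose the statement into three parts: (i) a construction of the Dyer-Lashof algebra as a direct sum of completed homologies, (ii) the self-duality identifications, and (iii) the primitive-indecomposable correspondence. First, I would set up the Dyer-Lashof algebra by adapting the Bruner-May-McClure-Steinberger framework to the $K(n-1)$-local category of $F$-modules. For an $E_\infty$ $F$-algebra $A$, the symmetric extended powers $\mathbb{P}_m A=(A^{\wedge_F m})_{h\Sigma_m}$ produce natural operations on $\pi_*A$, and the universal such operations are parametrized by $\widehat{F}_0 B\Sigma_m$; the wreath-product inclusions $\Sigma_m\wr\Sigma_\ell\hookrightarrow\Sigma_{m\ell}$ endow $\bigoplus_m\widehat{F}_0 B\Sigma_m$ with an algebra structure that encodes the Adem-type composition relations.

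Second, to establish $\widehat{F}_0 B\Sigma_m=F^0 B\Sigma_m={\rm Hom}_{F^0}(F^0 B\Sigma_m,F^0)$, I would first verify that $F^0 B\Sigma_m$ is a finitely generated free $F^0$-module. By Strickland's theorem, $E_n^0 B\Sigma_m$ is such over $E_n^0$, and this freeness transfers to $F^0$ by base change along $E_n^0\to F^0$; Theorem \ref{A} provides a reinforcing modular description of the quotient $F^0 B\Sigma_{p^m}/I$. Finite freeness then yields the universal-coefficient identification $\widehat{F}_0 B\Sigma_m\cong{\rm Hom}_{F^0}(F^0 B\Sigma_m,F^0)$ essentially for free. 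The remaining genuine self-duality $\widehat{F}_0 B\Sigma_m\cong F^0 B\Sigma_m$ I would deduce from the vanishing of the $\Sigma_m$-Tate spectrum $F^{t\Sigma_m}$ in the $K(n-1)$-local category, in the Greenlees-Sadofsky style. Tate vanishing converts the norm map into an equivalence between $(F\wedge (B\Sigma_m)_+)_{h\Sigma_m}$ and its fixed-point analogue, which at $\pi_0$ is precisely the equality of completed homology with cohomology.

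Third, the primitive-indecomposable duality is a formal consequence of the previous step. The composite block inclusions $B\Sigma_m\times B\Sigma_\ell\to B(\Sigma_m\times\Sigma_\ell)\to B\Sigma_{m+\ell}$ together with the diagonals on each $B\Sigma_m$ make $\bigoplus_m F^0 B\Sigma_m$ into a graded $F^0$-Hopf algebra; the finite-free duality established above makes $\bigoplus_m\widehat{F}_0 B\Sigma_m$ into its dual Hopf algebra, and the correspondence between primitives of a graded connected Hopf algebra and indecomposables of its dual is classical.

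The main obstacle will be the Tate-vanishing step. The classical vanishing of $E_n^{tG}$ for finite $G$ in the $K(n)$-local setting (Greenlees-Sadofsky, Hovey-Sadofsky, Kuhn) must be adapted to the $K(n-1)$-local setting on $F=L_{K(n-1)}E_n$; the delicate point is that $\pi_0 F$ is only partially complete at $(p,u_1,\dots,u_{n-2})$ while $u_{n-1}$ has been inverted, so one must check that the transfer/norm argument respects this mixed topology. Once that hurdle is cleared, the remainder of the proof is a relatively routine application of finite-free duality and Hopf-algebra combinatorics.
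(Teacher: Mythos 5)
Your overall strategy is the same as the paper's: prove $\widehat{F}_0B\Sigma_m\simeq F^0B\Sigma_m$ by showing the $\Sigma_m$-Tate construction on $F$ vanishes $K(n-1)$-locally, then use finite freeness of $F^0B\Sigma_m$ over $F^0$ to obtain the $\mathrm{Hom}$-dual description and the formal primitive/indecomposable correspondence. The problem is that you have flagged the Tate-vanishing step as ``the main obstacle'' and left it unproved, and that step is where essentially all of the content of the proposition lives; your worry about the mixed topology on $\pi_0F$ (partially complete at $(p,u_1,\dots,u_{n-2})$ with $u_{n-1}$ inverted) is exactly the issue, and it is not resolved by saying the classical argument ``must be adapted.'' The paper's resolution is concrete: starting from the Greenlees--May cofiber sequence $K\wedge BG\to F(BG,K)\to t_G(i_*K)^G$, one chooses a generalized Moore spectrum $M$ of type $n-1$ so that $M\wedge F$ is built from the residue-field spectrum $K_{u_{n-1}}$ with $\pi_*K_{u_{n-1}}=k(\!(u_{n-1})\!)[u^{\pm}]$; since $K_{u_{n-1}*}BG$ is finitely generated over the graded field $K_{u_{n-1}*}$, the Greenlees--Sadofsky criterion gives $t_G(i_*K_{u_{n-1}})^G=0$, hence $M\wedge t_G(i_*F)^G=0$, and a K\"unneth argument in $K(n-1)$-homology then shows $t_G(i_*F)^G$ is $K(n-1)$-acyclic. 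Some such d\'evissage to a field spectrum is needed; without it your argument has a genuine gap.

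A secondary point: you propose to obtain freeness of $F^0B\Sigma_m$ over $F^0$ by base change along $E_n^0\to F^0$ from Strickland's theorem. The paper explicitly warns (the remark following Proposition 2.5) that base-change arguments along $E^0\to F^0$ are unreliable because this map is not continuous, and instead proves freeness and evenness directly (Theorem 2.1 and Propositions 2.2--2.3) by induction through the quotient spectra $F_i=F/(p,u_1,\dots,u_{i-1})$ down to $K_{u_{n-1}}$, together with the Hopkins--Kuhn--Ravenel character theory for the rank count. Since that result is established earlier in the paper you may simply cite it here, but the route you sketch to it is not sound as stated.
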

This allows us to find a presentation of ${\rm DL}_{LE}$ in terms of coefficients of the total power operation $\psi_F^p$, as what have been done in the $K(n)$-local $E_n$-algebra setting.\par
When the height $n=2$, there is a connection between the spectrum $L_{K(1)}E_2$ and elliptic curves. Recall that there is a sheaf of $E_\infty$ rings $\mathcal{O}^{top}$ defined over the \'etale site $(\overline{\mathcal{M}}_{ell})_{\acute{e}t}$, which assigns each elliptic curve
\begin{equation*}
    C: {\rm Spec}(R)\longrightarrow \overline{\mathcal{M}}_{ell}
\end{equation*}
an $E_\infty$ ring $E^C_R$ \cite{MR1989190}. The $p$-completed stack $(\overline{\mathcal{M}}_{ell})_p$ can be decomposed into supersingular part $\overline{\mathcal{M}}_{ell}^{ss}$ and ordinary part $\overline{\mathcal{M}}_{ell}^{ord}$. Hence we have a chromatic fracture square for $p$-completed $Tmf$ \cite{behrens2020topological}:
\begin{equation*}
    \begin{tikzcd}
        Tmf_p \ar[r] \ar[d] & L_{K(2)}Tmf \ar[d] \\
        L_{K(1)}Tmf \ar[r] & L_{K(1)}L_{K(2)}Tmf.
    \end{tikzcd}
\end{equation*}
In the above diagram, the right up corner $L_{K(2)}Tmf$ is a variation of Morava $E$-theories of height 2 and the right lower corner is thus a kind of $K(1)$-localized $E_2$. The spectrum $L_{K(1)}E_2$ can be viewed as the intersection of $\mathcal{O}^{top}$ over $\overline{\mathcal{M}}_{ell}^{ss}$ and $\overline{\mathcal{M}}_{ell}^{ord}$, which corresponds to a punctured formal neighborhood of a supersingular point, as illustrated in the following picture.
\begin{figure}[h]
    \centering
    \includegraphics[width=0.5\linewidth]{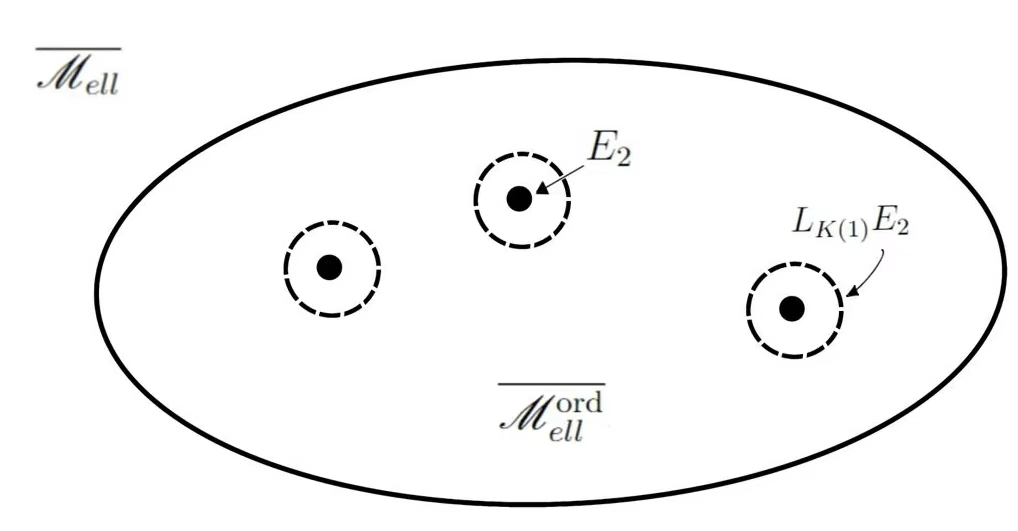}
    \caption{Spectra on $p$ completed stack $\overline{\mathcal{M}}_{ell}$}
    \label{fig:enter-label}
\end{figure}
\par
Our third result concerns the total power operation on $L_{K(1)}E_2$. The Dyer-Lashof algebra structure over $L_{K(1)}E_2$ is clear. It is a free $\theta-$algebra on one generator as stated in \cite{MR3328537}. Via the naturality of total power operations, we obtained the explicit formula of the total power operation on $L_{K(1)}E_2$.
\begin{mainthm}[Theorem \ref{thm3.5}]\label{C}
    \textit{Let $F$ be a $K(1)$-local Morava $E$-theory at height 2. The total power operation $\psi^p_F$ on $F^0$ is determined by}
    \begin{equation}\label{e1.1}
        \psi^p_F(h)=\alpha^*+\sum_{i=0}^p(\alpha^*)^i\sum_{\tau=1}^pw_{\tau+1}d_{i,\tau},
    \end{equation}
    \textit{where }
    \begin{equation}\label{e1.2}
        \alpha^*=(-1)^{p+1}p\cdot h^{-1}+\left(1+(-1)^{p+1}\frac{p(p-1)}{2}\right)p^3\cdot h^{-3}+lower\ terms
    \end{equation}
    \textit{is the unique solution of} 
    \begin{equation*}
        w(h,\alpha)=(\alpha-p)(\alpha+(-1)^p)^p-(h-p^2+(-1)^p)\alpha
    \end{equation*}
    \textit{in} $W(\overline{{\mathbb{F}}}_p)(\!(h)\!)^\wedge_p\cong F^0$.
    \par
    \vspace{\baselineskip}
    \textit{The other coefficients $w_i$ and $d_{i,\tau}$ are defined as}
    \begin{equation*}
        w_i=(-1)^{p(p-i+1)}\left[\binom{p}{i-1}+(-1)^{p+1}p\binom{p}{i}\right]
    \end{equation*}
    \textit{and}
    \begin{equation*}
        d_{i,\tau}=\sum\limits_{n=0}^{\tau-1}(-1)^{\tau-n}w_0^n\sum\limits_{
        \substack{m_1+\cdots m_{\tau-n}=\tau+i\\ 1\leq m_s\leq p+1\\ m_{\tau-n}\geq i+1}}w_{m_1}\cdots w_{m_{\tau-n}}.
    \end{equation*}
    \par
    \textit{In particular, $\psi^p_F$ satisfies the Frobenius congruence, i.e. $$\psi^p_F(h)\equiv h^p\ {\rm mod}\, p$$.}
\end{mainthm}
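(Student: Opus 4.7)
The plan is to derive the explicit formula by exploiting the naturality of total power operations along the localization map $E_2 \to F = L_{K(1)}E_2$, together with the fact cited from \cite{MR3328537} that the Dyer-Lashof algebra over $F$ is a free $\theta$-algebra on a single generator. The latter ensures that $\psi^p_F$ is determined by its value on the parameter $h$ of $F^0$. On the $E_2$ side, Rezk's description of $\psi^p_{E_2}$ is governed by the universal polynomial equation whose roots parametrise degree-$p$ subgroups of the universal deformation formal group; equivalently, by the classifying ring of degree-$p$ isogenies at a supersingular point of the moduli stack of elliptic curves. Restricting this universal equation to the punctured formal neighborhood that corresponds to $F^0$ and rewriting it in the coordinate $h$ produces the Weierstrass-type polynomial $w(h,\alpha)$ of the statement.

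The next step is to establish that $w(h,\alpha)=0$ has a unique solution $\alpha^{*}\in F^0=W(\overline{\mathbb F}_p)((h))^{\wedge}_p$ with the asymptotic expansion \eqref{e1.2}. Expanding $w(h,\alpha)$ as a polynomial in $\alpha$, its leading coefficient is a unit while the failure of the linear part to be invertible is controlled by $h$; one therefore solves iteratively for $\alpha^*$ as a Laurent expansion in $h^{-1}$. This can be packaged as a Hensel-type argument for the $p$-adically and $h$-adically complete ring $F^0$, or carried out by successive approximation starting from the leading term $(-1)^{p+1}p\cdot h^{-1}$. The combinatorial coefficients $d_{i,\tau}$ appearing in the formula record exactly the bookkeeping of this iterative expansion, while the $w_i$ are the coefficients of $w(h,\alpha)$ itself, obtained by expanding $(\alpha+(-1)^p)^p$ via the binomial theorem and regrouping.

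With $\alpha^*$ identified, the explicit formula \eqref{e1.1} for $\psi^p_F(h)$ is produced by substituting $\alpha=\alpha^*$ into the universal expression for $\psi^p_{E_2}(h)$ obtained from the supersingular isogeny equation, and expanding the resulting rational expression as a power series in $\alpha^*$. This is a careful but essentially algebraic computation once both the $E_2$-level formula and the polynomial $w(h,\alpha)$ are in hand. For the Frobenius congruence, one uses the general fact that every total power operation on the $\pi_0$ of a $K(1)$-local $E_\infty$ ring lifts the relative Frobenius modulo $p$; tracing the reduction of $w(h,\alpha)=0$ modulo $p$ then forces $\psi^p_F(h)\equiv h^p\pmod p$.

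The main obstacle I anticipate is the first step: correctly identifying the polynomial $w(h,\alpha)$ from the classical degree-$p$ isogeny equation at a supersingular point and transporting it faithfully to the coordinate $h$ on $F^0$. This requires a careful tracking of coordinates under the comparison between $E_2^0$ and $F^0$ provided by the localization $E_2 \to F$, together with a clean presentation of the quotient ring $E_2^0B\Sigma_p/I$ appearing in the modular interpretation of Theorem \ref{A}. Once $w(h,\alpha)$ is pinned down with the correct normalization, the remaining pieces—Hensel-type existence and uniqueness of $\alpha^*$, combinatorial expansion of $\psi^p_F(h)$ in $\alpha^*$, and the Frobenius congruence—reduce to formal manipulations.
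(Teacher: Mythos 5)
Your proposal follows essentially the same route as the paper: cite Zhu's presentation $E^0B\Sigma_p/I=W(\overline{\mathbb F}_p)[\![h,\alpha]\!]/w(h,\alpha)$ and his formula for $\psi^p_E(h)$, use naturality of the total power operation along $E\to F$ together with the collapse $F^0B\Sigma_p/J\cong F^0$ (the formal group over $F^0$ has height $1$), locate the unique root $\alpha^*$ of $w(h,\alpha)$ in $F^0$ by iteration plus a Hensel/mod-$p$ uniqueness argument, and substitute. Two small corrections to your narrative. First, the coefficients $d_{i,\tau}$ do \emph{not} arise as bookkeeping for the iterative expansion of $\alpha^*$; they are already present in Zhu's formula for $\psi^p_E(h)$ expressed in the basis $1,\alpha,\dots,\alpha^p$ of $E^0B\Sigma_p/I$ over $E^0$, and the localization step contributes nothing to them beyond the substitution $\alpha\mapsto\alpha^*$ (your later sentence about substituting into the universal expression is the correct account, so this is an internal inconsistency rather than a gap). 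Relatedly, no ``restriction and rewriting'' is needed to obtain $w(h,\alpha)$: it is defined over $E^0$ and one only needs the $E^0$-linearity of the comparison map $t:E^0B\Sigma_p/I\to F^0$ to see that $t(\alpha)$ must be a root of $w(h,\alpha)$ in $F^0$; also note $F^0$ is $p$-adically complete local with residue field $\overline{\mathbb F}_p(\!(h)\!)$ but not $h$-adically complete, so the Hensel argument is with respect to $(p)$ only, using that $\alpha(\alpha^p-h)$ has the single root $0$ since $h$ is not a $p$-th power in $\overline{\mathbb F}_p(\!(h)\!)$. Second, for the Frobenius congruence the paper does a direct computation ($\alpha^*\equiv 0$ and $w_i\equiv 0$ for $i=0,2,\dots,p$ modulo $p$, leaving $d_{0,p}\equiv(-1)^pw_1^p=h^p$), whereas you invoke the general $\theta$-algebra fact for $K(1)$-local $E_\infty$-rings; your route is legitimate and shorter, while the paper's serves as a consistency check on the explicit formula.
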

\begin{remark}
    The element $\alpha^*$ in \ref{e1.2} is the restriction of a modular form $\alpha$ of $\Gamma_0(p)$ over the punctured formal neighborhood of a supersingular point, which parametrizes subgroups of elliptic curves, and $h$ is a lift of Hasse invariant. 
    \par
    The modular form $\alpha$ is sometimes called a \textit{norm parameter} for $\Gamma_0(p)$ \cite[7.5.2]{katz1985arithmetic}\cite[Section 4.3]{ando2000power}. Choosing a coordinate $u$ on elliptic curves with a $\Gamma_0(p)$ structure, i.e. a degree $p$ subgroup, one has
    \begin{equation*}
        \alpha:=\prod_{Q\in\mathscr{G}^{(p)}-O}u(Q)
    \end{equation*}
    where $\mathscr{G}^{(p)}$ is a degree $p$ subgroup. Note that this implies $\alpha$ depends on the choice of $u$. (Remark \ref{r1.2})
    \par
    The element $\psi_F^p(h)$ is actually the image of $h$ under the \textit{Atkin-Lehner involution}, see Section \ref{s3.2} and \cite[Chapter 11]{katz1985arithmetic} for details.
\end{remark}
\begin{remark}\label{r1.2}
    Our computation depends on a specific model for Morava $E$-theories \cite[Definition 2.23]{zhu2019semistable}. The extent of this dependence can be found in \cite[Remark 2.25]{zhu2019semistable}.
\end{remark}
\par
This result can be viewed as a first step toward to the total power operation on the $p$-completed $Tmf$. Our analysis fits in the boxed regions in the diagram below \cite[Page 3]{zhu2019semistable}.

\begin{equation*}
    \begin{tikzpicture}[node distance=2.3cm]
        \node (ELG) {\footnotesize moduli of $\mathcal{E\ell\ell}_{G}$};
        \node (QELG) [below=of ELG] {\footnotesize moduli of $\mathcal{Q}uasi\mathcal{E}\ell\ell_G$};
        \node (Gamma) [right=of ELG]{\footnotesize moduli of $\mathcal{E\ell\ell}_{\Gamma_1(N)}$};
        \node (E) [right=of QELG] [rectangle,draw] {\footnotesize moduli of $E$};
        \node (C) [right=of Gamma] {\footnotesize moduli of $\mathscr{C}_N$};
        \node (G) [below=of C] [rectangle,draw] {\footnotesize moduli of $\mathbb{G}$};
        \node (L) [below=of E] [rectangle,draw]{\footnotesize moduli of $L_{K(1)}E$};
        \node (A) [below=of G] [rectangle,draw] {\footnotesize moduli of $\widetilde{\mathbb{G}}$};
        \node (P) at ($(QELG)!0.5!(L)$){\footnotesize has power operations};
        \draw[->] ($(ELG)+(0em,-1.2em)$) -- node[left]{\tiny at cusps} (QELG);
        \draw[->] (ELG) -- node[above]{\tiny $G=\Gamma_1(N)$} (Gamma);
        \draw[->] (Gamma) -- (E);
        \draw[->] (Gamma) -- node[above]{\tiny  derived version of} (C);
        \draw[<->] (C) -- node[sloped,above]{\tiny  Zhu} (E);
        \draw[<->] (C) -- node[right]{\tiny  Serre-Tate} node[left]{\tiny at a s.s point} (G);
        \draw[<->] (E) -- node[above]{\tiny  Ando-Hopkins-Strickland} node[below]{\tiny  Rezk} (G);
        \draw[->] (E) -- node[right]{\tiny $K(1)$-localization} (L);
        \draw[->] (G) -- node[left]{\tiny base change} (A);
        \draw[<->] (L) -- (A);
        \draw[double,->] (P) -- node[sloped,above]{\tiny Huan} (QELG);
        \draw[double,->] (P) -- (E);
        \draw[double,->] (P) -- (L);
        \node at (ELG) [below=2pt] {\tiny to be understood as $G$ varies};
        \node at (L) [below=7pt] {\tiny punctured formal neighborhood};
        \node at (L) [below=13pt] {\tiny of a supersingular point};
    \end{tikzpicture}   
\end{equation*}
Here $E$ is a height 2 Morava $E$-theory, $\mathscr{C}_N$ is a universal elliptic curve with an $N$ torsion point and $\mathbb{G}$ is the universal deformation of the associated height 2 formal group of $\mathscr{C}_N$ at a supersingular point.
\subsection{Outline of the paper}
In section \ref{s2.1}, we calculate the $K(n-1)$-localized Morava $E$-theory of symmetric groups using the generalized character map. Then we deduce the modular interpretation of the total power operation $\psi_F^p$ in section \ref{s2.2}.\par
In \cite{ando2004sigma}, the similar moduli interpretation of modified power operations in terms of level structures associated to \textit{abelian} group is confirmed. Though it is claimed in \cite[Remark 3.12]{ando2004sigma} that it is not necessary to use abelian group, there is not a direct proof in nonabelian cases. Section \ref{s2.2} is devoted to such a proof. Seasoned readers can take it for granted and skip it.\par
In section \ref{s2.3} we combine our analysis with augmented deformations and obtain the Theorem \ref{A}.\par
In section \ref{s2.4}, we show the self-dualness of $L_{K(n-1)}E_n$ and compute the Dyer-Lashof algebra structure on $K(n-1)$-local $E_n$-algebras, which is Theorem \ref{B}.\par
In section \ref{s3}, we focus on the particular case for $n=2$. We calculate an explicit formula for the total power operation using the naturality of power operations in section \ref{s3.1} and explain how these ideas are related to elliptic curve, modular forms and $p$-divisible groups in section \ref{s3.2}.\par
The section \ref{s4} is somehow independent from the main line. We investigate a family of spectra, so called augmented deformation spectra and show that there underlying spectra are independent of the choices of formal groups.
\subsection{Acknowledgements}
The author would like to thank Yifei Zhu for his help and guidance during my Ph.D. The author would also like to thank Yingxin Li, Xuecai Ma, Jiacheng Liang, Qingrui Qu and Zhouli Xu for helpful discussions, encouragements and corrections.
\section{$K(n-1)$-localized $E$-theory for symmetric groups}\label{s2}
Let $E$ be the Morava E-theory associated to a height $n$ formal group over a perfect field $k$ with
\begin{equation*}
    \pi_*E=W(k)[\![u_1,\dots,u_{n-1}]\!][u^\pm]
\end{equation*}
and $F$ be the $K(n-1)$-localization of $E$. The coefficient ring 
\begin{equation*}
    \pi_*F=W(k)(\!(u_{n-1})\!)^\wedge_p[\![u_1,\dots,u_{n-2}]\!][u^\pm]
\end{equation*}
is a Noetherian complete local ring with the maximal ideal $(p,u_1,\dots,u_{n-2})$. It satisfies the conditions in \cite[Section 1.3]{hopkins2000generalized}, in particular, $p^{-1}F^*\neq 0$ by direct computation.\par

\subsection{Calculations of $F^*B\Sigma_k$ and $F^*B\Sigma_k/I$}\label{s2.1} 
\begin{theorem}\label{t2.1}
    $F^0B\Sigma_k$ is a Noetherian local ring and a free module over $F^0$ of rank $d(k,n-1)$, which is defined to be the number of isomorphism classes of order $k$ sets with an action of $\mathbb{Z}_p^{n-1}$.
\end{theorem}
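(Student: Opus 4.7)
The plan is to apply a Hopkins--Kuhn--Ravenel style generalized character theory to $F$, where the delicate point is to pin down the correct ``effective height''. Although $\mathbb{G}_F$ has generic height $n$, its reduction to the residue field $k(\!(u_{n-1})\!)$ of $F^0$ has height $n-1$: inverting $u_{n-1}$ in characteristic $p$ annihilates the leading term of the $p$-series and exposes the coefficient $u_{n-1}$ of $x^{p^{n-1}}$ as a unit. Consequently the Tate module of this reduction is $\mathbb{Z}_p^{n-1}$, and this is the character group to which HKR should be applied. The remaining hypotheses of \cite[Section 1.3]{hopkins2000generalized}, namely that $F^0$ is a complete Noetherian local ring with residue field of characteristic $p$ and that $p^{-1}F^* \neq 0$, are already recorded in the excerpt.

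With these verifications in place, I would construct the splitting ring $L = L(F)$ by adjoining coordinates for all $p^k$-torsion of $\mathbb{G}_F$ and inverting $p$, and deduce the character isomorphism
\begin{equation*}
L \otimes_{F^0} F^0(BG) \;\cong\; \mathrm{Cl}_{n-1}(G, L),
\end{equation*}
where the right hand side is the $L$-algebra of class functions on conjugacy classes of continuous homomorphisms $\mathbb{Z}_p^{n-1} \to G$. Specializing to $G = \Sigma_k$, such conjugacy classes are in bijection with isomorphism classes of $\mathbb{Z}_p^{n-1}$-sets of order $k$, so the right hand side is $L^{d(k,n-1)}$.

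To descend the rank statement from $L$ to $F^0$, I would verify that $F^*(B\Sigma_k)$ is concentrated in even degrees. This follows from Strickland's analogous even-concentration for $E_n^*(B\Sigma_k)$ together with the observation that $L_{K(n-1)}$, applied to an $E_n$-module with free even homotopy, preserves even-concentration, since the localization can be computed by first inverting $u_{n-1}$ and then completing at $(p,u_1,\ldots,u_{n-2})$ (both exact on such modules). Evenness, combined with the character isomorphism and the faithful flatness of $F^0[1/p] \to L$, then forces $F^0(B\Sigma_k)$ to be a free $F^0$-module of rank $d(k,n-1)$. Being a finite algebra over the complete Noetherian local ring $F^0$, $F^0(B\Sigma_k)$ is semilocal and Noetherian; the local statement is the usual analysis of its connected components via the complete local decomposition.

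The main obstacle I expect is the careful setup of the character isomorphism at effective height $n-1$: one must justify that HKR-style characters behave as predicted in this intermediate localized setting, where the associated formal group has different heights at the generic and closed points of $\mathrm{Spec}(F^0)$. Once the character theory is in place, the enumeration of $\mathbb{Z}_p^{n-1}$-sets of order $k$ and the transfer of Strickland-type evenness from $E_n$ to $F$ via $K(n-1)$-localization are relatively standard inputs.
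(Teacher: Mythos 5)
Your overall architecture (an HKR character count over a rationalized splitting ring to get the rank $d(k,n-1)$, plus an even-concentration/freeness statement to descend from $p^{-1}F^*$ to $F^*$) is exactly the paper's, and your identification of the effective height $n-1$ via the Weierstrass degree of the $p$-series is the right reason the character group is $\mathbb{Z}_p^{n-1}$. However, your route to evenness contains a genuine gap. You propose to deduce evenness of $F^*B\Sigma_k$ by applying $L_{K(n-1)}$ to an $E$-module with homotopy $E^*B\Sigma_k$ and computing the localization algebraically. But $F^*B\Sigma_k=\pi_{-*}F(\Sigma^\infty_+B\Sigma_k,F)$ is \emph{not} $\pi_{-*}L_{K(n-1)}F(\Sigma^\infty_+B\Sigma_k,E)$: since $B\Sigma_k$ is not dualizable, the function spectrum does not commute with $K(n-1)$-localization. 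Your own rank computation already refutes the identification: $E^*B\Sigma_k$ is free of rank $d(k,n)$ over $E^*$, so inverting $u_{n-1}$ and completing would produce a free $F^*$-module of rank $d(k,n)$, whereas the theorem asserts rank $d(k,n-1)$. (This is the same discontinuity phenomenon the paper flags in the remark after Proposition \ref{p2.5}.) A secondary issue is that evenness plus the character isomorphism over $L$ does not by itself force freeness over $F^0$; one also needs finite generation and an argument that the relevant regular sequence acts regularly.

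The paper closes both gaps by working from the residue field upward rather than from $E$ downward: it first establishes that $F^*B\Sigma_k$ is finitely generated over $F^*$ using Greenlees' admissibility criterion (Proposition \ref{p2.2}); then it shows that the graded-field spectrum $K_{u_{n-1}}=F/(p,u_1,\dots,u_{n-2})$ has $K_{u_{n-1}}^*B\Sigma_k$ even and free, by running the HKR ``good group'' argument for $\Sigma_k$ with $K(n)$ replaced by an arbitrary even periodic field spectrum; finally it climbs the tower $F=F_0\to F_1\to\cdots\to F_{n-1}=K_{u_{n-1}}$ using the cofiber sequences $F_{i-1}\xrightarrow{u_i}F_{i-1}\to F_i$ and Nakayama's lemma to kill odd-degree classes and propagate freeness at each stage (Proposition \ref{p2.3}). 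If you want to salvage your write-up, replace the ``localize Strickland's evenness'' step with an argument of this type; the character-theoretic half of your proposal can then stand as is.
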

\begin{prop}\label{p2.2}
    $F^*B\Sigma_k$ is finitely generated over $F^*$.
\end{prop}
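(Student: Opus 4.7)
The plan is to prove Proposition~\ref{p2.2} by a complete-local Nakayama argument, using that $F^*$ is a complete local Noetherian ring and that a reduction modulo its maximal ideal relates $F^*B\Sigma_k$ to $K(n-1)$-cohomology, which is known to be finite-dimensional on classifying spaces of finite groups.

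Let $\mathfrak{m}=(p,u_1,\dots,u_{n-2})\subset F^*$, so that $F^*/\mathfrak{m}\cong k(\!(u_{n-1})\!)[u^{\pm}]\cong K(n-1)^*$. Killing a regular sequence generating $\mathfrak{m}$ in the spectrum $F$ produces a module whose homotopy is $K(n-1)^*$, so there is a universal-coefficient spectral sequence
\begin{equation*}
\mathrm{Tor}^{F^*}_{*,*}\bigl(K(n-1)^*,F^*B\Sigma_k\bigr)\ \Longrightarrow\ K(n-1)^*B\Sigma_k.
\end{equation*}
In particular $F^*B\Sigma_k/\mathfrak{m}F^*B\Sigma_k$ appears as an edge quotient of $K(n-1)^*B\Sigma_k$. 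Since $K(n-1)^*B\Sigma_k$ is a finite-dimensional $K(n-1)^*$-vector space by the classical finiteness of Morava $K$-theory of finite groups (Ravenel, Hopkins--Kuhn--Ravenel), so is $F^*B\Sigma_k/\mathfrak{m}F^*B\Sigma_k$.

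The second ingredient is $\mathfrak{m}$-adic completeness of $F^*B\Sigma_k$ as an $F^*$-module. Because $F=L_{K(n-1)}E_n$ is $K(n-1)$-local and $\mathfrak{m}$ is the kernel of $F^*\to K(n-1)^*$, one can exhibit $F\wedge B\Sigma_k^+$ as the inverse limit of its quotients by powers of a regular sequence generating $\mathfrak{m}$ (via a tower of Smith--Toda or Koszul-type quotients of $F$, whose homotopy limit spectral sequence collapses owing to the assumption $p^{-1}F^*\neq 0$ and the Noetherian hypotheses from \cite[Section 1.3]{hopkins2000generalized} recorded at the start of this section). This forces $F^*B\Sigma_k$ to be $\mathfrak{m}$-adically complete. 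Combining finite-dimensionality of $M/\mathfrak{m}M$ with $\mathfrak{m}$-completeness, the complete-local Nakayama lemma --- if $M$ is $\mathfrak{m}$-adically complete over a complete local Noetherian ring $(A,\mathfrak{m})$ with $M/\mathfrak{m}M$ finitely generated over $A/\mathfrak{m}$, then $M$ is finitely generated over $A$ --- yields that $F^*B\Sigma_k$ is finitely generated over $F^*$.

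The hardest step is the $\mathfrak{m}$-adic completeness: passage between a $K(n-1)$-localization of $E_n$ and an ideal-adic completion of its homotopy is not as clean as in the terminal $K(n)$-local case, so some care is needed in organizing the tower of quotients and checking that $\mathrm{lim}^1$ terms vanish. Once completeness is secured and the mod-$\mathfrak{m}$ finite-dimensionality bound is in place, Proposition~\ref{p2.2} follows immediately; moreover the same framework sets the stage for identifying the precise rank $d(k,n-1)$ asserted in Theorem~\ref{t2.1} via a generalized character computation.
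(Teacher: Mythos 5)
The paper's own proof of Proposition~\ref{p2.2} is essentially a citation: it checks that $F$ is admissible in the sense of Greenlees--Strickland (the only substantive point being that $F^0$ is Noetherian, since localization and completion preserve Noetherianness) and then invokes their Corollary 4.4. Your proposal instead tries to reprove finite generation from scratch by reducing modulo $\mathfrak{m}=(p,u_1,\dots,u_{n-2})$ and applying a complete-local Nakayama lemma. That strategy is reasonable in outline, but as written it has a gap at the key reduction step.

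The gap is the claim that $F^*B\Sigma_k/\mathfrak{m}F^*B\Sigma_k$ is finite-dimensional because it ``appears as an edge quotient of $K(n-1)^*B\Sigma_k$.'' The edge homomorphism of the K\"unneth/Tor spectral sequence goes the wrong way for this: the $\mathrm{Tor}_0$-term $F^*B\Sigma_k/\mathfrak{m}F^*B\Sigma_k$ surjects onto $E_\infty^{0,*}$, which is a \emph{submodule} of the abutment $K_{u_{n-1}}^*B\Sigma_k$. So finiteness of the abutment only bounds a quotient of $F^*B\Sigma_k/\mathfrak{m}F^*B\Sigma_k$ (by the images of the differentials hitting the $0$-line from the higher Tor groups); it does not bound the module itself unless you already control those higher Tor terms, which requires knowing $F^*B\Sigma_k$ is finitely generated or flat --- exactly what is to be proved. (The spectral sequence does collapse once Proposition~\ref{p2.3} is available, but that proposition is proved later and uses the present finite generation as input, so the argument is circular at this point.) The standard repair is to kill one regular element at a time: the cofiber sequence $F_{i-1}\xrightarrow{u_i}F_{i-1}\to F_i$ gives a long exact sequence in which $F_{i-1}^*B\Sigma_k/u_i$ \emph{injects} into $F_i^*B\Sigma_k$; since $F_i^*$ is Noetherian, submodules of finitely generated modules are finitely generated, and then completeness with respect to $u_i$ plus Nakayama lets you descend step by step from $K_{u_{n-1}}$ back to $F$. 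Two smaller imprecisions: honest $\mathfrak{m}$-adic completeness of $F^*B\Sigma_k$ is both more than you need and more than $K(n-1)$-locality hands you directly --- derived ($L$-)completeness suffices for the Nakayama step --- and the hypothesis $p^{-1}F^*\neq 0$ has nothing to do with vanishing of $\lim^1$ terms; it is the condition needed for the Hopkins--Kuhn--Ravenel character theory used later to compute the rank $d(k,n-1)$ in Theorem~\ref{t2.1}.
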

\begin{proof}
    This is a consequence of \cite[Corollary 4.4]{Greenlees1999VARIETIESAL}. We need to verify $F$ is admissible in the sense of \cite[Definition 2.1]{Greenlees1999VARIETIESAL}. $E^0$ is Noetherian and both localization and completion preserve Noetherianess. Hence $F^0$ is Noetherian and all other conditions are satisfied automatically.
\end{proof}
\begin{prop}\label{p2.3}
    $F^*B\Sigma_k$ is free over $F^*$, concentrated in even degrees. 
\end{prop}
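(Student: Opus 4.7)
The plan is to reduce the statement to the analogous known result for the uncompleted Morava $E$-theory $E=E_n$, via base change along the localization map $E\to F=L_{K(n-1)}E$.

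First I would invoke Strickland's computation: $E^*B\Sigma_k$ is a finitely generated free module over $E^*$, concentrated in even degrees. This is the height-$n$ input to the argument, and it neatly complements the finite-generation conclusion of Proposition \ref{p2.2} at height $n-1$.

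Second, the combination of finite rank, freeness, and even concentration allows one to realize an equivalence of $E$-module spectra
\begin{equation*}
    F(B\Sigma_k^+,E)\;\simeq\;\bigvee_i \Sigma^{-2m_i}E,
\end{equation*}
where the $2m_i$ are the degrees of a homogeneous basis of $E^*B\Sigma_k$. Since this is a \emph{finite} wedge, applying $K(n-1)$-localization termwise yields
\begin{equation*}
    L_{K(n-1)}F(B\Sigma_k^+,E)\;\simeq\;\bigvee_i \Sigma^{-2m_i}F,
\end{equation*}
and the natural comparison map $L_{K(n-1)}F(B\Sigma_k^+,E)\to F(B\Sigma_k^+,F)$ is an equivalence summand by summand, since on each summand it is just the localization $\Sigma^{-2m_i}E\to \Sigma^{-2m_i}F$.

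Third, taking homotopy groups gives the base change formula
\begin{equation*}
    F^*B\Sigma_k\;\cong\;F^*\otimes_{E^*}E^*B\Sigma_k,
\end{equation*}
and the right-hand side is visibly free over $F^*$ on generators in even degrees, of rank equal to the $E^*$-rank of $E^*B\Sigma_k$. This settles both assertions of the proposition simultaneously.

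The main obstacle is step two: justifying that the $E$-module splitting of $F(B\Sigma_k^+,E)$ into a finite wedge of shifted copies of $E$ survives the $K(n-1)$-localization and agrees with $F(B\Sigma_k^+,F)$. The finiteness of the wedge, which in turn depends on Strickland's theorem and on the finite generation established in Proposition \ref{p2.2}, is crucial: both $L_{K(n-1)}(-)$ and $F(B\Sigma_k^+,-)$ behave well on finite coproducts but would not a priori commute past an infinite one.
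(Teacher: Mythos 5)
Your reduction to $E$ fails at precisely the step you yourself flag as the main obstacle, and the conclusion you draw from it is in fact false. The finite wedge decomposition $F(B\Sigma_k^+,E)\simeq\bigvee_i\Sigma^{-2m_i}E$ and the identification $L_{K(n-1)}\bigl(\bigvee_i\Sigma^{-2m_i}E\bigr)\simeq\bigvee_i\Sigma^{-2m_i}F$ are both fine, but the comparison map $L_{K(n-1)}F(B\Sigma_k^+,E)\to F(B\Sigma_k^+,F)$ is \emph{not} an equivalence. Calling it "the localization summand by summand" presupposes that the target also splits as $\bigvee_i\Sigma^{-2m_i}F$ on the images of your chosen basis, which is exactly the statement to be proved; since $B\Sigma_k$ is not a finite complex, $F(B\Sigma_k^+,-)$ does not commute with Bousfield localization, and the fiber $F(B\Sigma_k^+,\mathrm{fib}(E\to F))$ has no reason to be $K(n-1)$-acyclic. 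A rank count shows the failure is real: by Hopkins--Kuhn--Ravenel, as used in the proof of Theorem \ref{t2.1}, $F^*B\Sigma_k$ has rank $d(k,n-1)$ over $F^*$, whereas $F^*\otimes_{E^*}E^*B\Sigma_k$ has rank $d(k,n)$. Already for $k=p$ and $n=2$ these are $2$ and $p+2$ (correspondingly, $E^0B\Sigma_p/I$ has rank $p+1$ while $F^0B\Sigma_p/J$ has rank $1$). So the base-change formula $F^*B\Sigma_k\cong F^*\otimes_{E^*}E^*B\Sigma_k$ of your third step is false, and no repair along these lines is possible without accounting for this drop in rank.

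The paper argues in the opposite direction: rather than descending from $E$, it ascends from the "residue field" of $F$. One sets $K_{u_{n-1}}=u_{n-1}^{-1}E/(p,u_1,\dots,u_{n-2})$, an even periodic field spectrum with coefficients $k(\!(u_{n-1})\!)[u^\pm]$, and observes that $\Sigma_k$ is good for any such field spectrum, so $K_{u_{n-1}}^*B\Sigma_k$ is even and automatically free. Then, using the finite generation established in Proposition \ref{p2.2}, one inducts down the quotient tower $F_i=F/(p,u_1,\dots,u_{i-1})$: the long exact sequence of the cofibration $F_{i-1}\xrightarrow{u_i}F_{i-1}\to F_i$ together with Nakayama's lemma kills the odd-degree part and yields freeness at each stage. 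Some bootstrapping argument of this kind, carried out intrinsically over $F$ rather than by base change from $E$, appears unavoidable here.
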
    
\begin{proof}
    From \cite[Proposition 3.6]{strickland1998morava}, we know that $E^*BG$ is concentrated in even degrees. Let $u_{n-1}^{-1}E$ be the homotopy colimit $$u_{n-1}^{-1}E=E\xrightarrow{u_{n-1}}E\xrightarrow{u_{n-1}}E\rightarrow\cdots$$ where $u_{n-1}$ is the corresponding element in $E^0$ and let $$K_{u_{n-1}}=u_{n-1}^{-1}E/(p,u_1,\dots,u_{n-2})$$ be the successively cofiber, with \begin{equation*}
        \pi_*K_{u_{n-1}}=k(\!(u_{n-1})\!)[u^\pm]
    \end{equation*}\par 
    We claim that $K^*_{u_{n-1}}B\Sigma_k$ is concentrated in even degrees and free. Since $\pi_*K_{u_{n-1}}$ is a graded field $k(\!(u_{n-1})\!)[u^\pm]$, $K_{u_{n-1}}^*B\Sigma_k$ is automatically free. In \cite[Section 7]{hopkins2000generalized}, it has been shown that $\Sigma_k$ is a good group respect to Morava $K$-theory. The argument is still valid if one replaces $K(n)$ with any even periodic field spectrum, which implies our claim.\par
    Now let $$F_i=F/(p,u_1,\dots,u_{i-1})$$ and let $F_0=F$. By construction, we have $F_{n-1}=K_{u_{n-1}}$.\par
    We will show that if $F_i^*B\Sigma_k$ is free and concentrated in even degrees, the same is true for $i-1$ as well. Consider the long exact sequence of cohomology groups
    \begin{equation*}
        F_{i-1}^*B\Sigma_k\rightarrow F_{i-1}^*B\Sigma_k\rightarrow F_{i}^*B\Sigma_k
    \end{equation*}
    obtained from the cofibration
    \begin{equation*}
        F_{i-1}\xrightarrow{u_i}F_{i-1}\rightarrow F_i.
    \end{equation*}
    Each $F_{i}^*B\Sigma_k$ is finitely generated by Proposition \ref{p2.2}. Since $F_i^*B\Sigma_k$ is concentrated in even degrees, multiplying $u_i$ on $F_{i-1}^{\rm odd}B\Sigma_k$ is a surjective. Hence by Nakayama's lemma, $F_{i-1}^{\rm odd}B\Sigma_k=0$. From this, we know the action of $u_i$ on $F_{i-1}^{\rm even}B\Sigma_k$ is regular, and $$F_{i-1}^*B\Sigma_k/u_i=F_{i}^*B\Sigma_k$$ which implies that $F^*_{i-1}B\Sigma_k$ is a free $F^*_{i-1}$ module. Note in particular, we have shown that 
    \begin{equation*}
        K_{u_{n-1}}^*B\Sigma_k=K_{u_{n-1}}^*\otimes_{F^*}F^*B\Sigma_k.
    \end{equation*}
\end{proof}
\begin{proof}[Proof of Theorem 1.1]
    Applying \cite[Theorem C]{hopkins2000generalized}, we have the rank of $$p^{-1}F^*B\Sigma_k$$ over $p^{-1}F^*$ is just $d(n-1,k)$. By Proposition \ref{p2.3}, this rank must equal to the rank of $F^*B\Sigma_k$ over $F^*$.
\end{proof}
\begin{prop}
    The ring $F^0B\Sigma_k/I=0$ for $k\neq p^m$ and $R_m:=F^0B\Sigma_{p^m}/I$ is a free module over $F^0$ of rank $\overline{d}(m,n-1)$, where $I$ is the transfer ideal and $$\overline{d}(m,n-1)=\prod_{t=1}^{n-2}\frac{p^{m+t}-1}{p^t-1}$$ is the number of lattices of index $p^m$ in $\mathbb{Z}_p^{n-1}$.
\end{prop}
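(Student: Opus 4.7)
The plan is to identify $F^0B\Sigma_k/I$, via transchromatic character theory, with the $F^0$-span of transitive $\mathbb{Z}_p^{n-1}$-actions on a $k$-element set, to observe that such actions only exist when $k$ is a power of $p$, and then to invoke the classical enumeration of open sublattices of $\mathbb{Z}_p^{n-1}$.

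First, I would unpack the character-theoretic proof of Theorem \ref{t2.1}: after inverting $p$, the Hopkins--Kuhn--Ravenel / Stapleton transchromatic character map equips $p^{-1}F^0B\Sigma_k$ with a canonical basis of idempotents indexed by conjugacy classes of continuous homomorphisms $\mathbb{Z}_p^{n-1}\to\Sigma_k$, i.e., by isomorphism classes of $\mathbb{Z}_p^{n-1}$-actions on $\{1,\dots,k\}$, which accounts for the rank $d(k,n-1)$. Under this identification, the image of the transfer from a proper partition subgroup $\Sigma_{k_1}\times\cdots\times\Sigma_{k_r}\hookrightarrow\Sigma_k$ with $r\geq 2$ is spanned by those idempotents whose associated action factors through that Young subgroup, so rationally the transfer ideal $I$ coincides with the span of non-transitive actions. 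Consequently $p^{-1}(F^0B\Sigma_k/I)$ has a basis indexed by transitive $\mathbb{Z}_p^{n-1}$-orbits of size $k$. Such an orbit is, up to isomorphism, the coset space $\mathbb{Z}_p^{n-1}/H$ for an open subgroup $H$ of index $k$; since $\mathbb{Z}_p^{n-1}$ is pro-$p$ these exist only when $k=p^m$, and for $k=p^m$ they are enumerated by Hermite normal forms, giving the classical count $\overline{d}(m,n-1)=\prod_{t=1}^{n-2}(p^{m+t}-1)/(p^t-1)$.

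It remains to descend this rank computation from $p^{-1}F^0$ to $F^0$ and to establish freeness of $R_m$. Since $F^0B\Sigma_{p^m}$ is a finitely generated free $F^0$-module by Proposition \ref{p2.3} and $I$ is generated by finitely many transfer classes, $R_m$ is finitely presented over the Noetherian local ring $F^0$ with generic rank $\overline{d}(m,n-1)$. Reducing modulo the regular sequence $(p,u_1,\dots,u_{n-2})$ produces the analogous quotient over the even-periodic graded field spectrum $K_{u_{n-1}}$, with $K_{u_{n-1}}^0=k(\!(u_{n-1})\!)$, where the same character-theoretic computation applies verbatim and returns dimension $\overline{d}(m,n-1)$. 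A Nakayama argument cascading back up the tower $F=F_0\to F_1\to\cdots\to F_{n-1}=K_{u_{n-1}}$, in the style of the proof of Proposition \ref{p2.3}, then forces $R_m$ to be a free $F^0$-module of rank $\overline{d}(m,n-1)$ and $F^0B\Sigma_k/I=0$ for $k\neq p^m$. The main obstacle is verifying that the transfer ideal really coincides with the span of non-transitive classes under the character-theoretic basis; this is the height-$(n-1)$ counterpart of the mechanism underlying Strickland's subgroup classification \cite{strickland1998morava}, and it is legitimate in the present setting precisely because the transchromatic character theory governing $L_{K(n-1)}E_n$ is of height $n-1$.
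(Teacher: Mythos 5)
Your rational computation is essentially the paper's argument in different clothing: the paper invokes Strickland's description of $\prod_k L\otimes_{F^0}F^0B\Sigma_k$ as the ring of functions on the Burnside semiring of finite $\mathbb{Z}_p^{n-1}$-sets, with the transfer ideals appearing as the $\times$-decomposables, whereas you use the HKR/Stapleton idempotents directly and identify $I$ with the span of the non-transitive classes. These are the same computation packaged dually, and both correctly yield the count $\overline{d}(m,n-1)$ of index-$p^m$ lattices after inverting $p$.

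The gap is in your descent to an integral statement. You assert that after reducing modulo $(p,u_1,\dots,u_{n-2})$ one lands over $K_{u_{n-1}}^0=k(\!(u_{n-1})\!)$, ``where the same character-theoretic computation applies verbatim.'' It does not: $k(\!(u_{n-1})\!)$ has characteristic $p$, and generalized character theory is available only after inverting $p$, so it says nothing about $K_{u_{n-1}}^0B\Sigma_k/\overline{I}$. Without that mod-$\mathfrak{m}$ dimension count your Nakayama cascade never starts, and neither the vanishing of $F^0B\Sigma_k/I$ for $k\neq p^m$ nor the freeness of $R_m$ follows from the rational rank alone (a quotient such as $\mathbb{Z}_p^2/\langle(p,0)\rangle$ has generic rank $1$ but is not free). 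The paper handles the vanishing by Strickland's integral argument (his Lemma 8.10): if $k$ is not a power of $p$, write $k=i+j$ with no carries in base $p$, so the index $\binom{k}{i}$ is prime to $p$, hence $\mathrm{tr}(1)$ lies in $I$ and is a unit in the local ring $F^0B\Sigma_k$; this works directly over $F^0$ with no reduction needed. For the freeness one needs the residue-field dimension of $K_{u_{n-1}}^0B\Sigma_{p^m}/\overline{I}$, which comes from the Morava $K$-theoretic analysis of symmetric groups (Strickland, Section 8, and the identification with $\Sub_m$ carried out in Proposition \ref{p2.5}), not from characters; you should either import that computation or route the freeness claim through the comparison with the free module $\mathcal{O}_{\Sub_m(\mathbb{G}_F)}$ as the paper does in the next proposition.
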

\begin{proof}
    For the first sentence, there is a standard argument in \cite[Lemma 8.10]{strickland1998morava}. For the second, using the method in \cite{strickland1997rational} we see that $L(DS^0):=\prod L\otimes_{F^0} F^0B\Sigma_k$ is a Hopf ring, which can be identified with the ring of functions $F(\mathbb{B},L)$, where $L$ is a ring extension of $F^0$ with $p^{-1}$ and all roots of the $p$-series of the formal group law over $F^0$ added and $\mathbb{B}$ is the Burnside semiring.\par 
    The $\times-$indecomposables ${\rm Ind}L(DS^0)=\prod L\otimes_{F^0}F^0B\Sigma_k/I_k$ is identified with $F(\mathbb{L},L)$, where $\mathbb{L}$ is the set if all lattices in $\mathbb{Z}_p^{n-1}$ and $I_k$ is the transfer. Hence we have an isomorphism $L\otimes_{F^0}F^0B\Sigma_k/I_k\cong F(\mathbb{L}_k,L)$, with $\mathbb{L}_k$ being the set of such lattices of index $k$. This implies the rank of $R_m$ over $F^0$ is $\overline{d}(m,n-1)$.
\end{proof}

\subsection{Modular interpretation of $\psi^p_F$}\label{s2.2} Let us first recall the construction of the additive total power operation.\par
Let $f\in E^0(X)$, which is represented by a map
\begin{equation*}
    f:X\longrightarrow E.
\end{equation*}
Then we obtained the $n$th power $f^n$ via the composition
\begin{equation*}
    X\xrightarrow{\Delta}\underbrace{X\wedge\cdots\wedge X}_n\xrightarrow{f^{\wedge n}}\underbrace{E\wedge\cdots\wedge E}_n\xrightarrow{\mu}E.
\end{equation*}
It is clear that the symmetric group $\Sigma_n$ acts on this map and preserves it. Hence it factors as 
\begin{equation*}
    f^n:X\rightarrow X\times B\Sigma_n\rightarrow E^{\wedge n}_{h\Sigma_n}\xrightarrow{\mu}E,
\end{equation*}
where the first map is including the base point. The latter composition is an element in $E^0(X\times B\Sigma_n)$, denoted by $\psi^n(f)$. Thus we have obtained a refined $n$th power operation
\begin{equation*}
    \Psi^n:E^0(X)\longrightarrow E^0(X\times B\Sigma_n)
\end{equation*}
called \textit{the nth total power operation}. Composing with including the base point of $B\Sigma_n$ yields the ordinary $n$th power operation. Note that $\Psi^n$ is only multiplicative. To obtain a ring map, we can further mod out non-additive terms:
\begin{equation*}
    \psi^n:E^0(X)\longrightarrow E^0(X\times B\Sigma_n)\longrightarrow E^0(X\times B\Sigma_n)/I
\end{equation*}
where $I$ is the transfer ideal generated by the images of the transfer maps
\begin{equation}
    {\rm Tr}: E^0\left(X\times(B\Sigma_i\times B\Sigma_j)\right)\rightarrow E^0(X\times B\Sigma_n) 
\end{equation}
for all $i+j=n$,\cite[Section 11.3]{rezk2006lectures}.\par

Let $\mathbb{G}_E$ and $\mathbb{G}_F$ be the formal groups over $\spf(E^0)$ and $\spf(F^0)$ respectively. In \cite[Section 9]{strickland1998morava}, the scheme $\spf(E^0B\Sigma_{p^k}/I)$ is identified with the subgroup scheme $\Sub_m(\mathbb{G}_E)$ \cite[Theorem 10.1]{strickland1997finite} over $\spf(E^0)$.\par
The same procedure can be carried through with $E$ replaced by $F$ without harm.
\begin{prop}\label{p2.5}
    There is a canonical isomorphism $\spf(F^0B\Sigma_{p^m}/I)\rightarrow\Sub_m(\mathbb{G}_F)$. That is, the ring $F^0B\Sigma_{p^m}/I$ classifies degree $p^m$ subgroups of $\mathbb{G}_F$.
\end{prop}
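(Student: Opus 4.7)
The plan is to repeat Strickland's proof of the analogous statement for $E$ verbatim, verifying that each step of \cite[Section 9]{strickland1998morava} goes through with $E$ replaced by $F=L_{K(n-1)}E_n$. As the paragraph just above notes, ``the same procedure can be carried through without harm''; the proof amounts to confirming this claim.

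First, I would construct the comparison map
\[
  \varphi\colon\spf(F^0B\Sigma_{p^m}/I)\longrightarrow\Sub_m(\mathbb{G}_F)
\]
following Strickland. Restriction along an appropriate abelian $p$-subgroup inclusion $A\hookrightarrow\Sigma_{p^m}$ produces a ring map $F^0B\Sigma_{p^m}\to F^0BA$, whose target is canonically the coordinate ring of a finite subgroup of $\mathbb{G}_F$. A standard double-coset transfer calculation---formal in the structure of $F$ as an $E_\infty$ ring---shows this map kills the transfer ideal $I$, so it descends to $\varphi$, whose image lies in $\Sub_m(\mathbb{G}_F)\subset\mathbb{G}_F[p^m]$.

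Next, I would verify $\varphi$ is an isomorphism by a rank count. The source is free of rank $\overline{d}(m,n-1)$ by the proposition immediately above. For the target, Strickland's representability theorem \cite[Theorem 10.1]{strickland1997finite} applied to $\mathbb{G}_F$ over the complete local ring $F^0$ produces a finite free representing ring, whose rank is determined by the number of degree-$p^m$ subgroups of the closed fibre $\mathbb{G}_F^0$. Since $u_{n-1}$ is a unit in $F^0$, this closed fibre has height $n-1$, yielding rank $\overline{d}(m,n-1)$. With both sides finite free of equal rank over the complete local ring $F^0$, $\varphi$ is an isomorphism iff its reduction modulo the maximal ideal $(p,u_1,\dots,u_{n-2})$ is surjective, by Nakayama.

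The residual surjectivity is Strickland's statement for a height-$(n-1)$ formal group over the field $k(\!(u_{n-1})\!)$. The main obstacle in this plan is that Strickland's arguments are phrased over \emph{perfect} residue fields, while $k(\!(u_{n-1})\!)$ is imperfect. I expect his proof to go through without essential change, since the operative ingredients are even periodicity, the formal-groups structure, and the finite-rank computation---none of which use perfectness. If any step does appeal to perfectness, one could instead factor $\varphi$ through Strickland's isomorphism for $E^0B\Sigma_{p^m}/I$ via the base-change map induced by $E\to F$, and identify the kernel explicitly as the subgroups of $\mathbb{G}_E$ that become degenerate upon inverting $u_{n-1}$.
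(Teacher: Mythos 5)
Your main plan is essentially the paper's proof: take Strickland's canonical comparison map, observe that both sides are finite free over $F^0$ of the same rank $\overline{d}(m,n-1)$, and check the isomorphism after reduction to the residue field $k(\!(u_{n-1})\!)$. The one ingredient that genuinely needs re-verification over the imperfect field is exactly the one the paper isolates: the nonvanishing of the Euler class $b_m=c_{p^m}^{(p^{n-1}-1)/(p-1)}$ in $K_{u_{n-1}}^0B\Sigma_{p^m}$, i.e.\ Strickland's Theorem 3.2 applied to the even periodic field spectrum $K_{u_{n-1}}$ in place of $K(n)$; your expectation that perfectness is not used here is correct. (The paper verifies injectivity of the reduction rather than surjectivity; for a map of free modules of equal finite rank over a field these are equivalent, and either feeds into Nakayama.)

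Two cautions. First, your construction of $\varphi$ is described backwards: restriction along $A\hookrightarrow\Sigma_{p^m}$ gives $F^0B\Sigma_{p^m}\to F^0BA$ and hence a map $\mathrm{Level}(A^*,\mathbb{G}_F)\to\spf(F^0B\Sigma_{p^m}/I)$ --- a family of points \emph{of} the source, which Strickland uses to detect injectivity on the special fibre --- not a map \emph{out of} $\spf(F^0B\Sigma_{p^m}/I)$. The map $\varphi$ itself comes from the degree-$p^m$ divisor $\mathbb{D}(V_{p^m})$ over $F^0B\Sigma_{p^m}$ becoming a subgroup divisor modulo the transfer ideal, as in Strickland's Proposition 9.1. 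Second, your fallback --- factoring through Strickland's isomorphism for $E^0B\Sigma_{p^m}/I$ via base change along $E\to F$ --- is precisely what the remark following this proposition rules out: $E^0\to F^0$ is not continuous (it takes $u_{n-1}$ from the maximal ideal to a unit), so $\Sub_m(\mathbb{G}_F)$ is not the pullback of $\Sub_m(\mathbb{G}_E)$ and this route is unavailable. Fortunately your primary argument does not need it.
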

\begin{proof}
    There is a canonical map from $\mathcal{O}_{\Sub_m(\mathbb{G}_F)}$ to $F^0B\Sigma_{p^m}/I$ as constructed in \cite[Proposition 9.1]{strickland1998morava}. Note that, these two rings has the same rank over $F^0$. So we proceed as \cite[Theorem 9.2]{strickland1998morava}, by showing 
    \begin{equation*}
        k(\!(u_{n-1})\!)\otimes_{F^0}\mathcal{O}_{\Sub_m(\mathbb{G}_F)}\rightarrow k(\!(u_{n-1})\!)\otimes_{F^0}F^0B\Sigma_{p^m}/I
    \end{equation*}
    is injective. The key ingredient here is to show $b_m=c_{p^m}^{(p^{n-1}-1)/(p-1)}\neq 0$ in $$k(\!(u_{n-1})\!)\otimes_{F^0}F^0B\Sigma_{p^m}=K_{u_{n-1}}^0B\Sigma_{p^m},$$ where $c_{p^m}=e(V_{p^m}-1)$ is the Euler class of representation $V_{p^m}-1$ in $F^0B\Sigma_{p^m}$ and $V_{p^m}$ is the standard complex representation of $\Sigma_{p^m}$. This follows from \cite[Theorem 3.2]{strickland1998morava} with $K$ replaced by $K_{u_{n-1}}$. The rest follows \cite[Theorem 9.2]{strickland1998morava}.
\end{proof}
\begin{remark}
    We can not obtain this result directly from \cite[Theorem 10.1]{strickland1997finite} which asserts that \begin{equation*}
    \spf F^0\times_{\spf E^0}\Sub\nolimits_m(\mathbb{G}_E)=\Sub\nolimits_m(\spf F^0\times_{\spf E^0}\mathbb{G}_E)=\Sub\nolimits_m(\mathbb{G}_F).
    \end{equation*}
    The failure of this equation is because the map $E^0\rightarrow F^0$ is not continuous.
\end{remark}\par
In order to figure out how the total power operation 
\begin{equation*}
    \psi^p_F:F^0\longrightarrow F^0B\Sigma_p/I
\end{equation*}
interacts with the modular interpretation of $F^0B\Sigma_p/I$, we shall recall some constructions from \cite[Section 3]{ando2004sigma}.\par
Let $Y$ denote the function spectrum $F(\mathbb{C}P^\infty, F)$, we have 
\begin{equation*}
    \pi_0Y=F^0\mathbb{C}P^\infty=F^0[\![x]\!]
\end{equation*}
which is a complete local Noetherian ring, with maximal ideal $(p,u_1,\dots,u_{n-2},x)$ and the canonical map $\pi_0F\rightarrow\pi_0Y$ is continuous with respect to their maximal ideal topology.
\begin{prop}\label{1.7}
    The ring $Y^0B\Sigma_p/J$ is free over $Y^0$ and equal to $Y^0\otimes_{F^0}F^0B\Sigma_p/I$, where $I$ and $J$ are transfer ideals respectively.
\end{prop}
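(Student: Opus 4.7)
The plan is to reduce the assertion to the freeness of $F^0B\Sigma_p$ and of $F^0B\Sigma_p/I$ over $F^0$ established earlier in this section, by passing through the Künneth-style identification $Y^0(-)=F^0(\mathbb{C}P^\infty\times -)$ and then using naturality of transfers.

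First I would establish the Künneth identification $Y^0B\Sigma_p\cong Y^0\otimes_{F^0}F^0B\Sigma_p$. Since $F^0B\Sigma_p$ is finitely generated and free over $F^0$ by Theorem \ref{t2.1}, the ordinary Künneth isomorphism yields $F^0(\mathbb{C}P^n\times B\Sigma_p)\cong F^0\mathbb{C}P^n\otimes_{F^0}F^0B\Sigma_p$ for every finite skeleton $\mathbb{C}P^n\subset\mathbb{C}P^\infty$. The transition maps $F^0\mathbb{C}P^{n+1}\to F^0\mathbb{C}P^n$ are surjective, so $\lim^1$ vanishes, and finite $F^0$-rank lets the functor $-\otimes_{F^0}F^0B\Sigma_p$ commute with $\varprojlim_n$; therefore
\begin{equation*}
Y^0B\Sigma_p \;=\; F^0(\mathbb{C}P^\infty\times B\Sigma_p) \;\cong\; F^0[\![x]\!]\otimes_{F^0}F^0B\Sigma_p \;=\; Y^0\otimes_{F^0}F^0B\Sigma_p.
\end{equation*}

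Next I would identify the transfer ideal $J\subset Y^0B\Sigma_p$ with $Y^0\otimes_{F^0}I$. By naturality, the $Y$-cohomology transfer attached to $\Sigma_i\times\Sigma_j\hookrightarrow\Sigma_p$ (with $i+j=p$) is obtained from the corresponding $F$-cohomology transfer by smashing with $\mathrm{id}_{\mathbb{C}P^\infty_+}$, so under the Künneth identification above it becomes $\mathrm{id}_{Y^0}\otimes\mathrm{Tr}^*\colon Y^0\otimes_{F^0}F^0(B\Sigma_i\times B\Sigma_j)\to Y^0\otimes_{F^0}F^0B\Sigma_p$. Hence $J$ is generated by the elements $1\otimes i$ with $i\in I$. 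Right-exactness of $Y^0\otimes_{F^0}(-)$ then gives
\begin{equation*}
Y^0B\Sigma_p/J \;=\; Y^0\otimes_{F^0}\bigl(F^0B\Sigma_p/I\bigr),
\end{equation*}
and since $F^0B\Sigma_p/I=R_1$ is free over $F^0$ by the proposition preceding Proposition \ref{p2.5}, the right-hand side is free over $Y^0$.

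The main obstacle is really the Künneth step: it depends on the finite freeness of $F^0B\Sigma_p$ so that tensoring with it commutes with the skeletal inverse limit defining $F^0\mathbb{C}P^\infty$. Once that identification is in hand, everything else is formal — the transfer maps are natural with respect to smashing with $\mathbb{C}P^\infty_+$, and tensor product is right exact — and none of the continuity/completion subtleties noted in the remark after Proposition \ref{p2.5} arise, precisely because $F^0B\Sigma_p$ is already a finite free module.
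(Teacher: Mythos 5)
Your proof is correct, and its overall strategy coincides with the paper's: both reduce the statement to the K\"unneth-type identification $Y^*B\Sigma_p\cong Y^*\otimes_{F^*}F^*B\Sigma_p$ and then deduce $J=Y^0\otimes_{F^0}I$ by naturality of transfers together with right exactness of the tensor product, with freeness of $Y^0B\Sigma_p/J$ following from freeness of $F^0B\Sigma_p/I$ over $F^0$. Where you differ is in how the K\"unneth identification is established. The paper first rewrites $Y^*B\Sigma_k=F^*(B\Sigma_k\wedge\mathbb{C}P^\infty)$ by adjunction and then runs the Atiyah--Hirzebruch spectral sequence $H^p(\mathbb{C}P^\infty,F^qB\Sigma_k)\Rightarrow Y^{p+q}B\Sigma_k$, which collapses because $F^*B\Sigma_k$ is even (Proposition \ref{p2.3}); you instead filter $\mathbb{C}P^\infty$ by finite skeleta, apply the ordinary K\"unneth isomorphism on each $\mathbb{C}P^n$, and pass to the limit using surjectivity of the transition maps (so $\lim^1$ vanishes) and the finite freeness of $F^0B\Sigma_p$ from Theorem \ref{t2.1} to commute $-\otimes_{F^0}F^0B\Sigma_p$ with the inverse limit. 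Your route is more elementary and in fact somewhat more careful: the paper's one-line AHSS argument leaves implicit both the convergence of the spectral sequence over the infinite complex $\mathbb{C}P^\infty$ and the identification of the abutment with the completed tensor product, which is exactly what your $\lim^1$ and finite-rank observations supply. Conversely, the AHSS argument isolates evenness as the only input, whereas you lean on freeness; in this paper both are established simultaneously in Proposition \ref{p2.3}, so nothing is lost either way.
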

\begin{proof}
    For each $k$, we have
    \begin{equation*}
        Y^*B\Sigma_k=[\Sigma_+^\infty B\Sigma_k, F(\mathbb{C}P^\infty, F)]=[\Sigma_+^\infty (B\Sigma_k\wedge\mathbb{C}P^\infty),F]=F^*(B\Sigma_k\wedge\mathbb{C}P^\infty).
    \end{equation*}
    By the Atiyah Hirzebruch spectral sequence, we have
    \begin{equation*}
        E_2^{p,q}=H^p(\mathbb{C}P^\infty, F^qB\Sigma_k)\Rightarrow Y^{p+q}B\Sigma_k
    \end{equation*}
    Since $F^*B\Sigma_k$ is concentrated in even degrees, we conclude that $$Y^*B\Sigma_k=Y^*\otimes_{F^*}F^*B\Sigma_k.$$
    It follows that $Y^0\otimes_{F^0}I=J$, and hence 
    \begin{equation*}
        Y^0B\Sigma_p/J=Y^0\otimes_{F^0}F^0B\Sigma_p/I.
    \end{equation*}
    which completes the proof.
\end{proof}
In the language of algebraic geometry, $\spf Y^0=\mathbb{G}_F$ and the above proposition can be summarized as the pullback diagram.
\begin{center}
    \begin{tikzpicture}
        \matrix (m) [
            matrix of math nodes,
            row sep=2.5em,
            column sep=2.5em,
            text height=1.5ex, text depth=0.25ex
        ]
        {    \spf(Y^0B\Sigma_p/J)=i^*\mathbb{G}_F & \mathbb{G}_F \\
             \spf(F^0B\Sigma_p/I) & \spf F^0 \\
        };

        \path[overlay,->, font=\scriptsize,>=latex]
        (m-1-1) edge (m-1-2)
        (m-1-1) edge (m-2-1)
        (m-1-2) edge (m-2-2)
        (m-2-1) edge (m-2-2)
        ;
    \end{tikzpicture}
\end{center}
Together with the naturality of the total power operation:
\begin{center}
    \begin{tikzpicture}
        \matrix (m) [
            matrix of math nodes,
            row sep=2.5em,
            column sep=2.5em,
            text height=1.5ex, text depth=0.25ex
        ]
        {    i^*\mathbb{G}_F & \mathbb{G}_F \\
             \spf(F^0B\Sigma_p/I) & \spf F^0 \\
        };

        \path[overlay,->, font=\scriptsize,>=latex]
        (m-1-1) edge node[auto] {\(\psi_Y^*\)}(m-1-2)
        (m-1-1) edge (m-2-1)
        (m-1-2) edge (m-2-2)
        (m-2-1) edge node[auto] {\(\psi_F^*\)} (m-2-2)
        ;
    \end{tikzpicture}
\end{center}
we obtain a map $\psi^*_{Y/F}:i^*\mathbb{G}_F\rightarrow(\psi_F^p)^*\mathbb{G}_F$ over the ring $F^0B\Sigma_p/I$, as indicated in the diagram.
\begin{center}
    \begin{tikzpicture}
        \matrix (m) [
            matrix of math nodes,
            row sep=2.5em,
            column sep=2.5em,
            text height=1.5ex, text depth=0.25ex
        ]
        {  i^*\mathbb{G}_F & &  \\
            & (\psi_F^p)^*\mathbb{G}_F & \mathbb{G}_F \\
            & \spf(F^0B\Sigma_p/I) & \spf F^0 \\
        };

        \path[overlay,->, font=\scriptsize,>=latex]
        (m-1-1) edge[dashed] node[auto] {$\psi^*_{Y/F}$} (m-2-2)
        (m-1-1) edge[bend left] node[auto] {\(\psi_Y^*\)}(m-2-3)
        (m-1-1) edge[bend right] (m-3-2)
        (m-2-2) edge (m-2-3)
        (m-2-2) edge (m-3-2)
        (m-2-3) edge (m-3-3)
        (m-3-2) edge node[auto] {\((\psi_F^p)^*\)} (m-3-3)
        ;
    \end{tikzpicture}
\end{center}
\begin{prop}\label{p2.8}
    The isogeny $\psi^*_{Y/F}:i^*\mathbb{G}_F\rightarrow(\psi_F^p)^*\mathbb{G}_F$ is of degree $p$ over $F^0B\Sigma_p/I$, with kernel the universal degree $p$ subgroup $K$ of $\mathbb{G}_F$ over $F^0B\Sigma_p/I$.
\end{prop}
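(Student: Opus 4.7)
The plan is to follow the strategy of \cite[Proposition 3.13]{ando2004sigma}, which handles the analogous assertion for Morava $E$-theory, and to adapt it to $F = L_{K(n-1)}E_n$. The two inputs that make this adaptation possible --- freeness of $F^*B\Sigma_k$ over $F^*$ in even degrees (Proposition \ref{p2.3}) and the modular interpretation $\spf(F^0B\Sigma_p/I) = \Sub_1(\mathbb{G}_F)$ (Proposition \ref{p2.5}) --- have already been established, so one expects the argument to transport essentially formally.

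First, I would check that $\psi^*_{Y/F}$ is a homomorphism of formal groups. The ring structure on $Y = F(\mathbb{C}P^\infty, F)$ coming from the $H$-space multiplication on $\mathbb{C}P^\infty$ is what encodes the group law of $\mathbb{G}_F$ on $\spf Y^0$, and because $\psi_Y^p$ is multiplicative and natural, the induced map of formal schemes preserves this group structure. Next, I would verify that $\psi^*_{Y/F}$ is finite flat of degree $p$. Writing $x$ for the chosen coordinate on $\mathbb{G}_F$, the map on coordinate rings sends a coordinate on $(\psi_F^p)^*\mathbb{G}_F$ to $\psi_Y^p(x) \in F^0B\Sigma_p/I[\![x]\!]$; naturality of $\psi^p$ applied to the base-point inclusion $* \hookrightarrow \mathbb{C}P^\infty$ forces vanishing constant term, and a computation with the standard complex representation of $\Sigma_p$ --- the same one that appears in Proposition \ref{p2.5} --- identifies $\psi_Y^p(x)$, up to a unit in $F^0B\Sigma_p/I[\![x]\!]$, as a Weierstrass polynomial of degree $p$ in $x$, from which finite flatness and the degree count follow.

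Finally, I would identify the kernel of $\psi^*_{Y/F}$ with $K$. Both are principal divisors on $i^*\mathbb{G}_F$: the kernel is cut out by $\psi_Y^p(x)$, while $K$ is cut out, by the construction of Proposition \ref{p2.5}, by the Euler class $c_p = e(V_p - 1)$ used there. The remaining task is to show that these two elements generate the same ideal in $F^0B\Sigma_p/I[\![x]\!]$; I expect this to be the main obstacle. In the $E$-theory setting the identification is contained in Strickland's proof of \cite[Theorem 9.2]{strickland1998morava}, via a comparison of the total power operation with the Euler class arising from the splitting principle for the standard representation of $\Sigma_p$. Since that argument depends only on the complex orientation of $F$ and on the freeness and evenness of Proposition \ref{p2.3}, it transports to $F$ without essential modification, and combined with the preceding two steps yields the proposition.
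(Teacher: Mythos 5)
Your first two steps agree with the paper's argument and are fine: $\psi^*_{Y/F}$ is a homomorphism by naturality and multiplicativity, and it has degree $p$ because $\psi^p_Y(x)$ has vanishing constant term and reduces to $x^p$ modulo the maximal ideal, so Weierstrass preparation applies. The divergence, and the gap, is in the identification of the kernel with $K$ --- the step you correctly single out as the main obstacle but then resolve by appeal to the wrong source. Strickland's proof of \cite[Theorem 9.2]{strickland1998morava} makes no use of the $E_\infty$ structure or of the total power operation: the divisor $\mathbb{D}(V_p)$ and the comparison map $\mathcal{O}_{\Sub_1(\mathbb{G})}\to E^0B\Sigma_p/I$ are built purely from Euler classes of representations, so there is no ``comparison of the total power operation with the Euler class'' to be found there. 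Moreover, the identity you implicitly need, $\psi^p_Y(x)\doteq e(V_p\otimes L)$ in $F^0B\Sigma_p/I[\![x]\!]$ (with $L$ the tautological line bundle), is not a formal consequence of $F$ being complex orientable: it is a norm-coherence condition on the chosen coordinate in the sense of Ando, and for an arbitrary coordinate on $\mathbb{G}_F$ it fails on the nose. The statement at the level of divisors is still true, but it genuinely requires an argument: for $n>2$ the ring $F^0B\Sigma_p/I$ has rank $\overline{d}(1,n-1)>1$ over $F^0$, so $\mathbb{G}_F$ carries several degree $p$ subgroup divisors over it, and the kernel of $\psi^*_{Y/F}$, though a subgroup divisor of degree $p$, is not automatically the universal one.

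The paper closes this gap by descent to abelian subgroups, following \cite{ando2004sigma}. For each transitive abelian $p$-subgroup $A\leq\Sigma_p$ one base-changes along ${\rm Level}(A^*,\mathbb{G}_F)\to\spf(F^0B\Sigma_p/I)$; there the composite of $\psi^p_F$ with the induced map to $D(A)$ is $\psi^\ell_F$, the isogeny becomes $\psi^{Y/F}_\ell$, and its kernel is $\ell[A]=\Sigma_{a\in A^*}[\ell(a)]$ by \cite[Proposition 3.21]{ando2004sigma}. This is where the interaction of the $E_\infty$ structure with the orientation is actually computed, and it is tractable precisely because $V_p$ restricted to $A$ splits into characters. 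Since $\mathbb{D}(V_p)$ also restricts to $\ell[A]$ over each ${\rm Level}(A^*,\mathbb{G}_F)$, and since the schemes ${\rm Level}(A^*,\mathbb{G}_F)$ jointly cover $\spf(F^0B\Sigma_p/I)$ (the input from \cite[Proposition 9.1]{strickland1998morava} re-established here via Proposition \ref{p2.5}), the two divisors coincide. If you want to keep your direct route you must either verify norm-coherence of the coordinate or insert this descent step; as written, the key identity is asserted rather than proved.
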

\begin{proof}
    Choosing a coordinate $x$ on $\mathbb{G}_F$, $\psi_Y^*$ sends $x$ to $x^p$ in $Y^0B\Sigma_p/J=\mathcal{O}_{i^*\mathbb{G}_F}$ modulo maximal ideal of $Y^0$. This follows from
    \begin{equation*}
        \pi_0Y\xrightarrow{D_p} \pi_0Y^{B\Sigma_p^+} \xrightarrow{S^0\rightarrow B\Sigma_p^+} \pi_0Y
    \end{equation*}
    sending $x$ to $x^p$.
    Since $(\psi^p_F)^*(x)=x$, we conclude that $\psi^*_{Y/F}$ is of degree $p$. Therefore the kernel of $\psi^*_{Y/F}$ is of rank $p$.\par
    To show the kernel is precisely the universal degree $p$ subgroup $K$ of $\mathbb{G}_F$ over $F^0B\Sigma_p/I$, we need to recall the construction of $K$ from \cite[Proposition 9.1]{strickland1998morava}(in which $K$ is denoted by $H_k$).
    Let $V_p$ be the standard permutation representation of $\Sigma_p$. There is a divisor $\mathbb{D}(V_p)$ of degree $p$ over $F^0B\Sigma_p$, whose base change to $F^0B\Sigma_p/I$ is $K$. Let $A$ be a transitive abelian $p$ subgroup of $\Sigma_p$, we have a composition of maps
    \begin{equation*}
        {\rm Level}(A^*,\mathbb{G}_F)\rightarrow{\rm Hom}(A^*,\mathbb{G}_F)=\spf F^0BA\rightarrow \spf F^0B\Sigma_p.
    \end{equation*}
    The divisor $\mathbb{D}(V_p)$ becomes a subgroup divisor $\Sigma_{a\in A^*}[\ell(a)]$ with $\ell$ the universal level-$A^*$ structure of $\mathbb{G}_F$ on ${\rm Level}(A^*,\mathbb{G}_F)$(See \cite[Section 3]{ando2004sigma} for definition).
    It is claimed in \cite[Proposition 9.1]{strickland1998morava} that the map 
    \begin{equation*}
        {\rm Level}(A^*,\mathbb{G}_F)\rightarrow\spf F^0B\Sigma_p
    \end{equation*}
    factors through $\spf F^0B\Sigma_p/I$ and the union of the images of these maps for all such $A$ is actually $\spf F^0B\Sigma_p/I$. Hence it is sufficient to show the base change of $\ker\psi^*_{Y/F}$ to ${\rm Level}(A^*,\mathbb{G}_F)$ is $\Sigma_{a\in A^*}[\ell(a)]$.\par
    Now Let $D(A)=\mathcal{O}_{{\rm Level}(A^*,\mathbb{G}_F)}$, the following diagram
    \begin{center}
        \begin{tikzcd}
            F^0 \ar[r, "D_A"'] \ar[rr, "\psi_F^\ell" description, bend left]  & F^0BA \ar[r] & D(A) \\
            F^0 \ar[u, equal] \ar[rr, "\psi_F^p" description, bend right] \ar[r, "D_p"] & F^0B\Sigma_p \ar[u] \ar[r] & F^0B\Sigma_p/I \ar[u, dashed]
        \end{tikzcd}
    \end{center}
    
    implies the composition of the total power operation $\psi_F^p$ and the dashed arrow is $\psi^\ell_F$ (See \cite[Definition 3.9]{ando2004sigma}). Hence after base change to ${\rm Level}(A^*,\mathbb{G}_F)$, the map $\psi^*_{Y/F}$ becomes $\psi_\ell^{Y/F}$ \cite[diagram 3.14]{ando2004sigma}. According to \cite[Proposition 3.21]{ando2004sigma}, the kernel of $\psi^{Y/F}_\ell$ is precisely $\ell[A]=\Sigma_{a\in A^*}[\ell(a)]$.
\end{proof}
\subsection{Augmented deformations}\label{s2.3}
In this section, we combine our analysis about $F^0B\Sigma_p/I$ and the modular interpretation of $F^0$ in terms of augmented deformations. Recall that there is a formal group $\mathbb{G}_F$ over $F^0$, which is the base change of the universal deformation $\mathbb{G}_E$. Let $\mathbb{G}_F^0$ be the special fiber of $\mathbb{G}_F$, which is the base change of $\mathbb{G}_F$ over the residue field $k(\!(u_{n-1})\!)$ of $F^0$.\par
The formal group $\mathbb{G}_F^0$ has height $n-1$ over $k(\!(u_{n-1})\!)$. At first glance, one would like to construct the deformation theory of $\mathbb{G}_F^0$ as \cite{lubin1966formal} does. However, the problem arises immediately for the field $k(\!(u_{n-1})\!)$ being imperfect. A way to avoid the imperfectness is the treatment stated in \cite{vankoughnett2021localizations}. We shall recall these constructions.
\begin{definition}
    An augmented deformation of a formal group $\mathbb{H}$ over $k(\!(u_{n-1})\!)$ consists of a triple $(\mathbb{K}/R,i,\alpha)$ where
    \begin{itemize}
        \item $R$ is a complete local ring and $\mathbb{K}$ is a formal group over $R$,
        \item A local homomorphism $i:\Lambda\rightarrow R$ fits into the commutative diagram
        \begin{center}
            \begin{tikzcd}
                \Lambda \ar[r,"i"] \ar[d]  & R \ar[d] \\
                k(\!(u_{n-1})\!) \ar[r, " \Bar{i}"] & R/\mathfrak{m}
            \end{tikzcd}
        \end{center}
        \item and an isomorphism $\alpha:\mathbb{H}\otimes^{\overline{i}}_{k(\!(u_{n-1})\!)}R/\mathfrak{m}\simeq\mathbb{K}\otimes_RR/\mathfrak{m}$,
    \end{itemize}
    where $\Lambda=W(k)(\!(u_{n-1})\!)^\wedge_p$ is a Cohen ring with residue field $k(\!(u_{n-1})\!)$.
\end{definition}
\begin{remark}
    There always exists such a local homomorphism $i:\Lambda\rightarrow R$ filling the diagram
    \begin{center}
        \begin{tikzcd}
            \Lambda \ar[dashed, r] \ar[d]  & R \ar[d] \\
            k(\!(u_{n-1})\!) \ar[r] & R/\mathfrak{m}
        \end{tikzcd}
    \end{center}
    due to the property of Cohen rings. Note that such morphisms may not be unique.\par
    This is the main difference between deformation theories over perfect fields and imperfect fields. When the field on the left lower corner is perfect, there is a unique local homomorphism from the Witt ring $W(k)$ to $R$ and consequently a unique $W(k)$-algebra structure over $R$. While this is not true for Cohen rings of imperfect fields. Hence one must specify a $\Lambda$-algebra structure when discussing deformations in the imperfect context.
\end{remark}
The  moduli problem of classifying augmented deformations is representable by the ring $F^0=\pi_0L_{K(n-1)}E_n$ as described below.
\begin{theorem}[\cite{vankoughnett2021localizations}, Theorem 1.1]\label{t2.11}
    Let $\mathbb{H}$ be any height $n-1$ formal groups over $k(\!(u_{n-1})\!)$. The ring $F^0$ classifies augmented deformations of $\mathbb{H}$. To be precise, let ${\rm Def}^{\rm {aug}}_{\mathbb{H}}(R)$ denote the groupoid of augmented deformations of $\mathbb{H}$ together with isomorphisms. Then we have
    \begin{equation*}
        {\rm Def}^{\rm{aug}}_{\mathbb{H}}(R)={\rm Maps}_{cts}(F^0,R).
    \end{equation*}
    In particular, this implies the moduli problem of classifying augmented deformation is discrete.
\end{theorem}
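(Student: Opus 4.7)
The plan is to leverage the shape $F^0 = \Lambda[\![u_1,\ldots,u_{n-2}]\!]$, where $\Lambda = W(k)(\!(u_{n-1})\!)^\wedge_p$ is the Cohen ring of $k(\!(u_{n-1})\!)$. Formally this is the ring one would expect for a Lubin--Tate moduli of deformations of a height $n-1$ formal group over an imperfect field, provided the Cohen lift $\Lambda$ is fixed as coefficient ring. The whole point of the augmented datum $(\mathbb{K}/R, i, \alpha)$ is precisely to specify this Cohen lift, since $i:\Lambda\to R$ is not unique (unlike the $W(k')\to R$ map for perfect $k'$). I would therefore set things up as a variant of Lubin--Tate representability in which the $\Lambda$-algebra structure is carried as an extra piece of data.

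First I would unpack both sides. A continuous map $f:F^0\to R$ factors uniquely through the structure inclusion $\Lambda\hookrightarrow F^0$ and is therefore determined by a pair $\bigl(i:\Lambda\to R,\ (r_1,\ldots,r_{n-2})\in\mathfrak{m}_R^{\,n-2}\bigr)$. Starting from such an $f$, I would produce an augmented deformation by setting $\mathbb{K}:=f^*\mathbb{G}_F$ and noting that, since $F^0/\mathfrak{m}_{F^0}=k(\!(u_{n-1})\!)$, the composite $F^0\to R\to R/\mathfrak{m}$ factors through $\bar i$, so $\mathbb{K}\otimes R/\mathfrak{m}\cong \mathbb{G}_F^0\otimes_{\bar i}R/\mathfrak{m}$; an identification of $\mathbb{H}$ with $\mathbb{G}_F^0$ (chosen once and for all, after a faithfully flat extension if needed) then supplies $\alpha$.

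The key step in the other direction is a generalized Lubin--Tate classification: for any height $n-1$ formal group $\mathbb{H}/k(\!(u_{n-1})\!)$ and any complete local $\Lambda$-algebra $(R,i)$ with the prescribed residue behaviour, deformations of $\mathbb{H}$ to $(R,i)$ should be classified by $n-2$ elements of $\mathfrak{m}_R$. I would prove this by replaying the classical Lubin--Tate construction with $W(k)$ replaced by $\Lambda$: first lift $\mathbb{H}$ to a formal group law $\widetilde{\mathbb{H}}$ over $\Lambda$ coefficient-by-coefficient, then build a universal deformation over $\Lambda[\![v_1,\ldots,v_{n-2}]\!]$ by the standard inductive adjustment of the formal group law using vanishing of the relevant symmetric $2$-cocycle groups. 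Identifying $\Lambda[\![v_1,\ldots,v_{n-2}]\!]$ with $F^0$ then concludes the argument.

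The hard part will be the rigidity step inside Lubin--Tate: one must show that any two lifts of a given $\star$-isomorphism from the special fiber extend uniquely, so that the functor is genuinely represented (not merely pro-represented up to isomorphism). Classically this uses perfectness of the residue field to trivialize Frobenius, so I would need to verify that the required $\mathrm{Ext}^1$ vanishings for formal group laws of finite height continue to hold over $\Lambda$ despite $k(\!(u_{n-1})\!)$ being imperfect---this is plausible since the obstructions are controlled by the height of $\mathbb{H}$, not by the Frobenius action on the residue field, but it is the place where careful bookkeeping is required. Independence of the choice of $\mathbb{H}$ then follows because any two height $n-1$ formal groups over $k(\!(u_{n-1})\!)$ admit $\star$-isomorphic lifts to $\Lambda$ after a suitable flat extension, giving canonical comparisons of the representing rings.
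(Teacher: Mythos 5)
First, a point of order: the paper does not prove this statement at all --- it is imported verbatim as Theorem 1.1 of the cited work of Vankoughnett, so there is no in-paper argument to compare yours against. Measured against the cited source, your overall strategy is the right one and is essentially the strategy used there: run Lubin--Tate deformation theory relative to the Cohen ring $\Lambda$, with the augmentation $i\colon\Lambda\to R$ recording the Cohen lift that perfectness of the residue field would otherwise supply uniquely and for free. That said, as written the proposal has gaps at exactly the load-bearing points.

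(1) The move ``identify $\mathbb{H}$ with $\mathbb{G}_F^0$ after a faithfully flat extension if needed'' does not work as stated: Section \ref{s4} of this paper exhibits formal groups of the same height over $\overline{\mathbb{F}}_p(\!(u)\!)$ that are \emph{not} isomorphic over that field and only become isomorphic over an infinite separable extension, so a comparison of the two deformation functors along such an extension would require genuine Galois descent that you have not set up. The cleaner route (and the one the theorem actually asserts) is to prove pro-representability of ${\rm Def}^{\rm aug}_{\mathbb{H}}$ for each $\mathbb{H}$ separately, obtaining a representing ring only abstractly isomorphic to $\Lambda[\![v_1,\dots,v_{n-2}]\!]\cong F^0$. (2) Even for $\mathbb{H}=\mathbb{G}_F^0$, ``identifying $\Lambda[\![v_1,\dots,v_{n-2}]\!]$ with $F^0$'' is a theorem, not a notational choice: one must show that the classifying map from the abstract representing ring to $F^0$ induced by viewing $\mathbb{G}_F$ as an augmented deformation of its own special fiber is an isomorphism, e.g.\ by a surjectivity-plus-cotangent-space argument using the $p$-series; this is the form of the statement that Theorem \ref{t2.12} actually consumes. (3) The rigidity/discreteness step you defer is the crux, but your diagnosis of the danger is slightly off: uniqueness of $\star$-isomorphisms between deformations follows from the finite height of $\mathbb{H}$ (the unit leading coefficient of $[p]$ modulo $\mathfrak{m}$ forces any $\star$-automorphism to be the identity), and perfectness of the residue field plays no role there; the place imperfectness genuinely bites is the non-uniqueness of $i$, which the augmented framework is designed to absorb. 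So the skeleton is sound and matches the cited proof, but the proposal is a plan rather than a proof until (1)--(3) are carried out.
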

Combining our previous analysis, namely Proposition \ref{1.7}, \ref{1.8}, we obtain our Theorem \ref{A}.
\begin{theorem}\label{t2.12}
    The ring $F^0B\Sigma_{p^m}/I$ is free over $F^0$ of rank $\overline{d}(m,n-1)$. It classifies augmented deformations of $\mathbb{G}_F^0$ together with a subgroup of degree $p^m$. 
    \begin{equation*}
        {\rm Maps}_{cts}(F^0B\Sigma_{p^m}/I,R)=\{(\mathbb{K}/R, H) \}
    \end{equation*}
    To be precise, for any complete local ring $R$, there is a bijection between the set of continuous maps from $F^0B\Sigma_{p^m}/I$ to $R$ and the set of all pairs $(\mathbb{K}/R,H)$, where $\mathbb{K}$ is an augmented deformations of $\mathbb{G}_F^0$ and $H$ is a degree $p^m$ subgroup of $\mathbb{K}$.\par
    Equivalently, $F^0B\Sigma_{p^m}/I$ classifies augmented deformations of $m$th Frobenius \cite[Section 11.3]{rezk2009congruence}, with the universal example
    \begin{equation*}
        \psi^*_{Y/F}:i^*\mathbb{G}_F\rightarrow(\psi^{p^m}_F)^*\mathbb{G}_F
    \end{equation*}
    defined in the Proposition \ref{p2.8}.
\end{theorem}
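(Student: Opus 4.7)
The strategy is to combine three ingredients already in place: the rank statement comes directly from Proposition 2.4; the classification of degree $p^m$ subgroups of $\mathbb{G}_F$ by $\spf(F^0B\Sigma_{p^m}/I)$ is Proposition 2.5; and the classification of augmented deformations of $\mathbb{G}_F^0$ by $F^0$ is Theorem 2.11. The task is to weave the latter two together, with Proposition 2.8 supplying the universal isogeny at the end.

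First I would decompose an arbitrary continuous ring map $\varphi\colon F^0B\Sigma_{p^m}/I \to R$ into its restriction $\varphi_0 := \varphi|_{F^0}\colon F^0\to R$ along the unit, together with the data of lifting $\varphi_0$ to the subgroup-classifying ring. Since $F^0B\Sigma_{p^m}/I$ is finite free and local over $F^0$, the unit sends the maximal ideal of $F^0$ into the maximal ideal of $F^0B\Sigma_{p^m}/I$, so $\varphi_0$ is automatically continuous for the maximal-ideal topology. Applying Theorem 2.11 to $\varphi_0$ produces an augmented deformation $\mathbb{K}/R$ of $\mathbb{G}_F^0$, and the formal group underlying this augmented deformation is canonically the pullback $\varphi_0^\ast\mathbb{G}_F$. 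Applying Proposition 2.5 together with the base change $\spf R\times_{\spf F^0}\Sub_m(\mathbb{G}_F)=\Sub_m(\mathbb{K})$ then identifies the lift $\varphi$ of $\varphi_0$ with a choice of degree $p^m$ subgroup $H\subset\mathbb{K}$. Conversely any such pair $(\mathbb{K},H)$ assembles to a continuous ring map. This yields the claimed bijection, and by Proposition 2.4 the classifying ring is free over $F^0$ of rank $\overline{d}(m,n-1)$.

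For the second half of the theorem, I would extend Proposition 2.8 from $p$ to $p^m$ by the same method: the map $\psi^\ast_{Y/F}\colon i^\ast\mathbb{G}_F \to (\psi_F^{p^m})^\ast\mathbb{G}_F$ over $F^0B\Sigma_{p^m}/I$ is an isogeny of degree $p^m$ whose kernel is the universal degree $p^m$ subgroup, obtained by covering $\spf F^0B\Sigma_{p^m}/I$ by the level schemes $\mathrm{Level}(A^\ast,\mathbb{G}_F)$ for transitive abelian $p$-subgroups $A\subset\Sigma_{p^m}$ and identifying the pullback of $\psi^\ast_{Y/F}$ with the quotient isogeny $\psi_\ell^{Y/F}$ of Ando--Hopkins--Strickland. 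Under the bijection established above, the target $(\psi_F^{p^m})^\ast\mathbb{G}_F$ is then precisely the quotient $\mathbb{K}/H$ viewed as another augmented deformation, so the universal continuous map $\psi_F^{p^m}\colon F^0\to F^0B\Sigma_{p^m}/I$ indeed sends $(\mathbb{K},H)\mapsto\mathbb{K}/H$, giving the ``$m$th Frobenius'' description.

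The main obstacle I anticipate is making the compatibilities between sources explicit: one must check that the formal group associated by Theorem 2.11 to the map $\varphi_0$ really is the scheme-theoretic pullback $\varphi_0^\ast\mathbb{G}_F$ used in Proposition 2.5 (this is implicit in Vankoughnett's construction but has to be spelled out to paste the two modular interpretations together), and that the continuous-maps convention of Theorem 2.11 matches the maximal-ideal topology on $F^0B\Sigma_{p^m}/I$ coming from the finite-free structure. A secondary subtlety is that, unlike in the perfect residue field case, the $\Lambda$-algebra structure on $R$ is additional data; one must therefore verify that the canonical $\Lambda$-algebra structure on the universal $R=F^0B\Sigma_{p^m}/I$ coming from $F^0$ is the one that makes $\mathbb{K}/H$ itself an augmented deformation, so that the quotient construction lands inside the moduli problem. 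Once these bookkeeping issues are resolved, the three ingredients fit together without further work.
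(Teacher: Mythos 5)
Your proposal is correct and follows essentially the same route as the paper, whose entire proof is the single line ``Combines Proposition \ref{p2.5}, \ref{p2.8} and Theorem \ref{t2.11}''; you simply make explicit the gluing of the two modular interpretations and the extension of Proposition \ref{p2.8} from $p$ to $p^m$ that the paper leaves implicit. The compatibility checks you flag (matching topologies, the $\Lambda$-algebra structure on the quotient) are exactly the details the paper suppresses, so your write-up is a faithful, slightly more careful version of the intended argument.
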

\begin{proof}
    Combines Proposition \ref{p2.5}, \ref{p2.8} and Theorem \ref{t2.11}
\end{proof}
\subsection{Dyer-Lashof algebra on $K(n-1)$-local $E$ algebras}\label{s2.4}
Let $R$ be an $E_\infty$ ring. The \textit{Dyer-Lashof} algebra ${\rm DL}_R$ is an algebraic theory, which describes all homotopy operations on $R$-algebras. \cite[Section 9.1]{rezk2006lectures}\par
For simplicity, we restrict ourselves only in degree $0$ part. In this case, the Dyer-Lashof algebra is described by the ring $\oplus_{m\geq 0} R_0B\Sigma_m$. Let $A$ be an $R$-algebra. Choosing an element $\alpha\in R_0B\Sigma_m$, which is represented by an $R$-module map $$R\longrightarrow R\wedge B\Sigma_m$$ we can obtain an \textit{individual operation} from $\pi_0A$ to itself via composing with the total power operation.
\begin{equation*}
    R\xrightarrow{\alpha} R\wedge B\Sigma_m\xrightarrow{P_m} A
\end{equation*}\par
Let $E$ be the Morava $E$-theory of height $n$. The category of $K(n-1)$-local $E$-algebra is equivalent to the category of $K(n-1)$-local $L_{K(n-1)}E$-algebra, because for any $K(n-1)$-local $E$-algebra $A$, the structure map $E\rightarrow A$ factors through
\begin{equation*}
    E\longrightarrow L_{K(n-1)}E\longrightarrow A
\end{equation*}
Thus the calculation of the Dyer-Lashof algebra on $K(n-1)$-local $E$-algebra demands the calculation of $F_0B\Sigma_m$, where $F=L_{K(n-1)}E$, as usual. 
To obtain good values on Dyer-Lashof algebra, we modify our notation with the completed homology
\begin{equation*}
    \widehat{F}_0B\Sigma_m=\pi_0L_{K(n-1)}(F\wedge B\Sigma_m)
\end{equation*}
\begin{prop}\label{p2.13}
    There is an isomorphism
    \begin{equation*}
        \widehat{F}_0B\Sigma_m\rightarrow F^0B\Sigma_m
    \end{equation*}
    Hence the Dyer-Lashof algebra over $K(n-1)$-local $E$-algebra is generated by the coefficients of the total power operation $\psi_F^p$.
\end{prop}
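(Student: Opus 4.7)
The argument proceeds in three steps. First, since $F^*B\Sigma_m$ is finite free over $F^*$ (Proposition \ref{p2.3}), the smash product $F\wedge \Sigma^\infty_+B\Sigma_m$ is already $K(n-1)$-local: the Atiyah-Hirzebruch spectral sequence for $L_{K(n-1)}(F\wedge B\Sigma_m^+)$ collapses with no $\lim^1$ or ${\rm Ext}$ contributions, so completion at $K(n-1)$ does nothing and $\widehat{F}_0 B\Sigma_m = F_0 B\Sigma_m$. The same freeness yields a perfect Kronecker pairing, and the universal coefficient theorem then gives $F_0 B\Sigma_m \cong {\rm Hom}_{F^0}(F^0 B\Sigma_m, F^0)$.

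The third and most substantial step is the self-duality $F^0 B\Sigma_m \cong {\rm Hom}_{F^0}(F^0 B\Sigma_m, F^0)$. The plan is to invoke a Tate vanishing result in the $K(n-1)$-local category: the norm map $F_{hG}\to F^{hG}$ should be a $K(n-1)$-local equivalence for $G$ a finite $p$-group. For the monochromatic Morava $E$-theory $E_n$ this is the Greenlees-Sadofsky theorem. In our transchromatic setting I would either (a) reduce to the monochromatic case along the tower $F_0=F \to F_1 \to \cdots \to F_{n-1}=K_{u_{n-1}}$ from the proof of Proposition \ref{p2.3}, using a Nakayama-type induction together with the finite freeness of $F^*B\Sigma_m$, or (b) invoke known transchromatic Tate vanishing results. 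Once the norm is a $K(n-1)$-local equivalence, combining with the dualizability from the first step yields a Spanier-Whitehead self-duality of $\Sigma^\infty_+ B\Sigma_m$ in the $K(n-1)$-local $F$-module category, hence the $F^0$-module self-duality, and therefore the isomorphism $\widehat{F}_0 B\Sigma_m \cong F^0 B\Sigma_m$.

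For the concluding Dyer-Lashof statement, the graded sum $\bigoplus_{m\geq 1}\widehat{F}_0 B\Sigma_m$ is the linear dual of the cooperations algebra $\bigoplus_m F^0 B\Sigma_m$, and its primitives correspond under this duality to the indecomposables modulo transfers, namely to $R_m=F^0B\Sigma_{p^m}/I$. By Theorem \ref{t2.12}, $R_m$ corepresents the universal augmented deformation with a degree $p^m$ subgroup, so these primitives are precisely the structure constants of $\psi_F^p$, paralleling Rezk's presentation of the $K(n)$-local Dyer-Lashof algebra. The main obstacle is the Tate vanishing underlying the self-duality step; this is classical at the top chromatic level but requires a transchromatic extension here, which I expect to be technically delicate but conceptually standard.
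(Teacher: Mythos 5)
Your third step is the heart of the matter and it is exactly what the paper does: the Tate spectrum $t_G(i_*F)^G$ is shown to be $K(n-1)$-acyclic by smashing with a generalized Moore spectrum $M$ of type $n-1$ (so that $M\wedge F$ is built from the field-like spectrum $K_{u_{n-1}}$ appearing in the proof of Proposition \ref{p2.3}), verifying $t_G(i_*K_{u_{n-1}})^G=0$ via the Greenlees--Sadofsky criterion that $K_{u_{n-1}*}BG$ is finitely generated, and then detecting the vanishing of $K(n-1)_*\bigl(t_G(i_*F)^G\bigr)$ by the K\"unneth formula. Your route (a) is this argument in slightly different clothing, and your reading of the concluding Dyer--Lashof statement (primitives dual to indecomposables, Rezk-style presentation) agrees with the paper, which treats that part as immediate once the isomorphism is in hand.

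The genuine problem is your first step. The claim that $F\wedge\Sigma^\infty_+B\Sigma_m$ is \emph{already} $K(n-1)$-local does not follow from the freeness of $F^*B\Sigma_m$ over $F^*$, and it is false in general: $B\Sigma_m$ is an infinite complex, a filtered colimit of local spectra need not be local, and the Greenlees--May cofiber sequence $F\wedge BG\to F(BG,F)\to t_G(i_*F)^G$ shows that locality of $F\wedge BG$ would force the Tate spectrum to be \emph{contractible}, whereas the argument only gives (and one only expects) that it is $K(n-1)$-\emph{acyclic} --- transchromatically the Tate construction typically produces a nonzero spectrum of lower height (compare $KU_p^{t\mathbb{Z}/p}$, which is rational and nonzero). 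So the equality $\widehat{F}_0B\Sigma_m=F_0B\Sigma_m$ is unjustified, and with it the chain $\widehat{F}_0=F_0=\mathrm{Hom}_{F^0}(F^0B\Sigma_m,F^0)\cong F^0B\Sigma_m$ breaks. Fortunately steps one and two are an unnecessary detour: once the Tate spectrum is $K(n-1)$-acyclic, applying $L_{K(n-1)}$ to the cofiber sequence gives $L_{K(n-1)}(F\wedge BG)\simeq F(BG,F)$ directly, and taking $\pi_0$ is precisely the asserted isomorphism $\widehat{F}_0B\Sigma_m\to F^0B\Sigma_m$, with no universal-coefficient or separate Spanier--Whitehead self-duality step required. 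Drop the locality claim, run the Tate vanishing first, and read off the conclusion from the cofiber sequence.
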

\begin{proof}
    As explained above, the second statement is straightforward if the isomorphism holds. Consider the Greenlees-May's cofiber sequence \cite{greenlees1995generalized}
    \begin{equation*}
        k\wedge EG \rightarrow F(EG,k)\rightarrow t_G(k)
    \end{equation*}
    Taking homotopy pixed point yields a cofiber sequence
    \begin{equation*}
        K\wedge BG\rightarrow F(BG,K)\rightarrow t_G(i_*K)^G
    \end{equation*}
    where $K$ is any spectrum and $k=i_*K$ is the $G$-equivariant version of $K$ \cite[Section 1.1]{greenlees1996tate}.\par
    We will show that the spectrum $t_G(i_*F)^G$ is $K(n-1)$ acyclic and hence we have an equivalence
    \begin{equation*}
        L_{K(n-1)}(F\wedge BG)\rightarrow L_{K(n-1)}F(BG,F)=F(BG,F)
    \end{equation*}
    which induces the desired isomorphism.\par
    Our strategy is as follow. Choose a generalized Moore spectrum $M$ of type $n-1$, such that $M\wedge F$ is generated by $K_{u_{n-1}}$, which is defined in Proposition \ref{2.3}. Then we show 
    \begin{equation}\label{e2.2}
        t_G(i_*K_{u_{n-1}})^G=0
    \end{equation}
    Hence by property of Tate cohomology, we have
    \begin{equation*}
        t_G(i_*(M\wedge F))^G=M\wedge t_G(i_*F)^G=0
    \end{equation*}
    Applying $K(n-1)$ homology and Kunneth formula imples
    \begin{equation*}
        K(n-1)_*(t_G(i_*F)^G)=0
    \end{equation*}
    According to \cite[Proposition 3.1]{greenlees1996tate}, the equation \ref{e2.2} holds if for all finite group $G$, $K_{u_{n-1}*}BG$ is finite generated as $K_{u_{n-1}*}$ module. This is true because $K_{u_{n-1}}$ is admissible and the finiteness follow from \cite[Corollary 4.4]{Greenlees1999VARIETIESAL}, or one can directly check that
    \begin{equation*}
        K_{u_{n-1}*}(-)=K_*(-)\otimes_{K_*}K_{u_{n-1}*}
    \end{equation*}
    and the condition of being finite is valid for $K$, where $K$ is the even periodic version of $K(n)$.
\end{proof}
\section{An explicit Calculation on the $n=2$ case}\label{s3}
Let $E$ be a Morava $E$-theory of height 2 over the field $\overline{\mathbb{F}}_p$, with
\begin{equation*}
    E^*=W(\overline{\mathbb{F}}_p)[\![u_1]\!][u^\pm].
\end{equation*}
Let $F$ be the $K(1)$ localization of $E$, whose coefficients ring is 
\begin{equation*}
    F^*=W(\overline{\mathbb{F}}_p)(\!(u_1)\!)^\wedge_p[u^\pm].
\end{equation*}
Let $\mathbb{G}_E$ and $\mathbb{G}_F$ be the formal groups over $E^0$ and $F^0$ respectively.\par
In this section, we give an explicit calculation of the additive total power operation $\psi_F^p$ in terms of the expression of $\psi^p_E$ for the $n=2$ case.
\subsection{The formula for $\psi^p_F$}\label{s3.1} The naturality of the total power operations gives a diagram:
\begin{equation}\label{e3.1}
    \begin{tikzcd}[column sep=3em,/tikz/column 2/.style={column sep=0.7em}]
        E^0 \ar[r, "\psi^p_E"] \ar[d] & E^0B\Sigma_p/I \ar[d, "t"] & \\
        F^0 \ar[r, "\psi^p_F"]        & F^0B\Sigma_p/J \ar[r, equal] & F^0
    \end{tikzcd}
\end{equation}
where $I$ and $J$ are the corresponding transfer ideals. The equality on the right corner is because the formal group $\mathbb{G}_F$ is of height 1, hence $F^0B\Sigma_p/J$ is free of rank $\Bar{d}(1,1)=1$ over $F^0$.
\begin{remark}
    From now on, we will use $h$ instead of $u_1$ in $E^*$ and $F^*$. This is because when height is 2, the ring $E^0$ can be viewed as the place where the universal deformation of a certain supersingular elliptic curve is defined. The letter $h$ here stands for the Hasse invariant for it being a lift of Hasse invariant.
\end{remark}
The map $t$ in the middle is $E^0$ linear. To see this, consider the diagram
\begin{center}
    \begin{tikzcd}
        E^0(\bigvee_{i=1}^{p-1}B\Sigma_i\times B\Sigma_{p-i}) \ar[r, "tr_E"] \ar[d] & E^0B\Sigma_p \ar[r] \ar[d] & E^0B\Sigma_p/I \ar[d, "t", dashed]\\
        F^0(\bigvee_{i=1}^{p-1}B\Sigma_i\times B\Sigma_{p-i}) \ar[r, "tr_F"] & F^0B\Sigma_p \ar[r] & F^0B\Sigma_p/J
    \end{tikzcd}
\end{center}
The maps in the top row are between $E^0$ modules and maps in the bottom can also be viewed as $E^0$ linear maps via $E^0\rightarrow F^0$. Then one can check that the left two vertical maps are $E^0$ linear, which implies $t$ is $E^0$ linear as well.\par
Now we can deduce the explicit expression of $\psi_F^p$ via the calculation of $\psi^p_E$, which is summarized in the two theorems below.
\begin{theorem}[\cite{zhu2019semistable}, Theorem A]\label{t3.2}
    After choosing a preferred model for $E$ \cite[Definition 2,23]{zhu2019semistable}, the ring $E^0B\Sigma_p/I$ can be interpreted as 
    \begin{equation*}
        E^0B\Sigma_p/I=W(\overline{\mathbb{F}}_p)[\![h,\alpha]\!]/w(h,\alpha)
    \end{equation*}
    with
    \begin{equation}\label{e3.2}
        w(h,\alpha)=(\alpha-p)\left(\alpha+(-1)^p\right)^p-\left(h-p^2+(-1)^p\right)\alpha.
    \end{equation}
\end{theorem}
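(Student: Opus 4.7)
The plan is to identify $E^0B\Sigma_p/I$ as a moduli space of degree-$p$ subgroups via Strickland's theorem, and then to use a suitable Weierstrass model for $\mathbb{G}_E$ to write down an explicit minimal polynomial for a natural generator. At height $2$, the $E$-theoretic analogue of Proposition \ref{p2.5} (i.e., Strickland's original result) identifies $\spf(E^0B\Sigma_p/I)$ with $\Sub_1(\mathbb{G}_E)$, and this ring is known to be free of rank $p+1$ over $E^0 = W(\overline{\mathbb{F}}_p)[\![h]\!]$. So it suffices to produce a generator $\alpha$ and compute its minimal polynomial over $E^0$, which must be monic of degree $p+1$ in $\alpha$.

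First I would invoke the preferred model from \cite[Definition 2.23]{zhu2019semistable}: a Weierstrass curve over $W(\overline{\mathbb{F}}_p)[\![h]\!]$ whose special fiber at $h=0$ is a fixed supersingular elliptic curve and for which $h$ is a lift of the Hasse invariant. By Serre--Tate, the formal group of this curve is canonically $\mathbb{G}_E$, so degree-$p$ subgroups of $\mathbb{G}_E$ extend uniquely to finite flat subgroup schemes of order $p$ of the elliptic curve. Choosing the coordinate $u = -x/y$ on the formal group and letting $\mathscr{G}^{(p)}$ denote the universal degree-$p$ subgroup, I set
\begin{equation*}
    \alpha = \prod_{Q \in \mathscr{G}^{(p)} \setminus O} u(Q),
\end{equation*}
the \emph{norm parameter} for $\Gamma_0(p)$. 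Standard facts about norm parameters show that the image of $\alpha$ modulo the maximal ideal of $E^0$ generates the reduction $\overline{\mathbb{F}}_p \otimes_{E^0} E^0B\Sigma_p/I$, so by Nakayama $\alpha$ generates $E^0B\Sigma_p/I$ as a topological $E^0$-algebra. Hence the ring is $W(\overline{\mathbb{F}}_p)[\![h,\alpha]\!]/(w(h,\alpha))$ for some monic polynomial $w$ of degree $p+1$ in $\alpha$.

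The main obstacle, which is the heart of the argument in \cite{zhu2019semistable}, is identifying $w$ with the formula \eqref{e3.2}. To pin it down I would work on two special loci. On the supersingular fiber ($h=0$), the only degree-$p$ subgroup is the kernel of Frobenius, which is infinitesimal; combined with the length-$(p+1)$ condition this forces $w(0,\alpha)$ to be a unit multiple of $\alpha^{p+1}$ with constant term in the maximal ideal. On the ordinary locus (after inverting $h$) the connected--étale sequence of the $p$-divisible group splits, producing a canonical subgroup for which $\alpha \equiv p$ modulo higher-order terms --- this yields the factor $(\alpha - p)$ --- while the Atkin--Lehner involution exchanging a subgroup with its quotient carries $\alpha$ to a root of $(\alpha + (-1)^p)^p$ up to corrections, yielding the remaining factor with multiplicity $p$. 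The linear-in-$\alpha$ correction $-(h - p^2 + (-1)^p)\alpha$ is then forced by matching the leading Eisenstein-type coefficient coming from the multiplication-by-$p$ isogeny on the preferred Weierstrass curve (equivalently, from the $p$-series of the associated formal group law), together with the freeness of $E^0B\Sigma_p/I$ as an $E^0$-module which rules out any ambiguity in lower-order terms. Carrying this out, as in \cite{zhu2019semistable}, identifies $w(h,\alpha)$ with the expression in \eqref{e3.2}.
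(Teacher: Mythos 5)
First, a structural point: the paper does not actually prove Theorem \ref{t3.2}. It is imported verbatim as Theorem A of \cite{zhu2019semistable}, and the only hint of its derivation given here is the remark in Section \ref{s3.2} that Zhu identifies $\alpha$ with a modular form of level $\Gamma_0(p)$ and computes its values at the cusps of $\mathscr{M}_{N,p}$. Your first two paragraphs are consistent with that framework and are essentially correct: the identification of $\spf(E^0B\Sigma_p/I)$ with the scheme of degree-$p$ subgroups, the rank count $p+1$, the passage from the formal group to the elliptic curve via Serre--Tate, and the choice of the norm parameter $\alpha=\prod_{Q\in\mathscr{G}^{(p)}-O}u(Q)$ as a topological generator are all the right starting point.

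The gap is concentrated in your third paragraph, where the explicit polynomial is supposed to be determined, and it is a real gap rather than a compressed argument. (1) The exact constants in \eqref{e3.2} --- that the ``canonical subgroup'' root is exactly $p$ rather than $p$ times a unit, that the second factor is $(\alpha+(-1)^p)^p$ on the nose, and that the correction is $-(h-p^2+(-1)^p)\alpha$ --- are consequences of the specific normalizations in the preferred model, in particular the condition ${\rm Frob}^2=(-1)^{p-1}[p]$ of \cite[Definition 2.23]{zhu2019semistable}. You invoke the preferred model but never use any of its defining properties, so nothing in your argument can distinguish \eqref{e3.2} from the equivalent-but-different presentations that arise from other models (the paper itself notes that the $p=3$ formula of \cite{zhu2014power} differs for exactly this reason). (2) The sentence ``the linear-in-$\alpha$ correction is then forced by matching the leading Eisenstein-type coefficient'' is where all the content lives, and it is an assertion, not an argument: $w$ does not literally factor as $(\alpha-p)(\alpha+(-1)^p)^p$ over the ordinary locus, and the coefficient of $\alpha$ is precisely where $h$ enters, i.e.\ it encodes the entire relation between the Hasse-invariant lift and the norm parameter. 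In \cite{zhu2019semistable} this relation is pinned down globally, by computing $q$-expansions of $\alpha$ and of its Atkin--Lehner transform at the two cusps of the modular curve and exploiting the structure of the degree-$(p+1)$ cover $\mathscr{M}_{N,p}\to\mathscr{M}_N$; that global input is not visible from the formal neighborhood of a supersingular point, so the purely local analysis you sketch (special fiber at $h=0$ plus the punctured formal disk) cannot close the argument as written.
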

\begin{theorem}[\cite{zhu2019semistable}, Theorem B]\label{t3.3}
    The image of $h$ under $\psi^p_E$ is 
    \begin{equation}
        \psi^p_E(h)=\alpha+\sum_{i=0}^p\alpha^i\sum_{\tau=1}^pw_{\tau+1}d_{i,\tau},
    \end{equation}
    where $w_i$'s are defined to be
    \begin{equation*}
        w_i=(-1)^{p(p-i+1)}\left[\binom{p}{i-1}+(-1)^{p+1}p\binom{p}{i}\right]
    \end{equation*}
    and
    \begin{equation*}
        d_{i,\tau}=\sum\limits_{n=0}^{\tau-1}(-1)^{\tau-n}w_0^n\sum\limits_{
        \substack{m_1+\cdots m_{\tau-n}=\tau+i\\ 1\leq m_s\leq p+1\\ m_{\tau-n}\geq i+1}}w_{m_1}\cdots w_{m_{\tau-n}}.
    \end{equation*}
\end{theorem}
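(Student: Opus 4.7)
The plan is to derive the formula by combining the presentation of $E^0B\Sigma_p/I$ from Theorem \ref{t3.2} with the modular interpretation of $\psi^p_E$ as the universal quotient isogeny. By Theorem \ref{t3.2} the ring $E^0B\Sigma_p/I$ is free of rank $p+1$ over $E^0$ with basis $1,\alpha,\dots,\alpha^p$, so $\psi^p_E(h)$ admits a unique expression as a polynomial $\sum_{i=0}^{p} c_i\,\alpha^i$ with $c_i\in E^0$; the problem reduces to pinning down the $c_i$.

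Under the Strickland identification $\spf(E^0B\Sigma_p/I)=\Sub_1(\mathbb{G}_E)$, the operation $\psi^p_E$ corresponds to the universal degree-$p$ quotient isogeny $\mathbb{G}_E \to \mathbb{G}_E/H$, so $\psi^p_E(h)$ is literally the Hasse invariant of the quotient formal group. Using the preferred model in which $\mathbb{G}_E$ arises from a universal deformation of a supersingular elliptic curve and $\alpha$ is the norm parameter $\prod_{Q\in\mathscr{G}^{(p)}-O} u(Q)$, I would compute the quotient curve explicitly (for example via V\'elu-type formulas) and extract its Hasse invariant, yielding $\psi^p_E(h)$ as a function of $h$ and $\alpha$.

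To translate this into the stated closed form, a direct binomial expansion shows $(\alpha-p)(\alpha+(-1)^p)^p=\sum_{i=0}^{p+1}w_i\alpha^i$ with $w_i$ exactly as defined in the theorem, and the relation $w(h,\alpha)=0$ then reduces any power $\alpha^{p+1+k}$ to a polynomial of degree $\le p$ in $\alpha$ by iterated substitution. Expanding the geometric output in the basis $1,\alpha,\dots,\alpha^p$ produces the combinatorial coefficients $d_{i,\tau}$: the sum over compositions $m_1+\cdots+m_{\tau-n}=\tau+i$ with $m_{\tau-n}\ge i+1$ records precisely the length-$(\tau-n)$ strings of such reductions that contribute to the final coefficient of $\alpha^i$, while the alternating signs $(-1)^{\tau-n}$ and the $w_0^n$ factors come from collecting the constant terms released at each step.

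The main obstacle will be the combinatorial bookkeeping for the $d_{i,\tau}$ and verifying that the explicit sum matches the iterated reduction. Two useful checks are available: first, Rezk's Frobenius congruence $\psi^p_E(h)\equiv h^p\pmod p$, which the final formula must satisfy after reduction mod $p$; second, the Atkin-Lehner involution on $E^0B\Sigma_p/I$ that swaps $(h,\alpha)$ with $(\psi^p_E(h),\alpha^*)$, which provides a dual relation constraining the coefficients and is precisely what makes it possible to invert the formula in the subsequent derivation of $\psi^p_F(h)$.
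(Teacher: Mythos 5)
First, note that the paper you are working from does not prove Theorem~\ref{t3.3} at all: it is imported verbatim from \cite{zhu2019semistable} (Theorem B there), and the present paper only uses it as input to Proposition~\ref{p3.4} and Theorem~\ref{thm3.5}. So the comparison has to be against Zhu's argument rather than anything in this text. Your overall skeleton does match that argument: $E^0B\Sigma_p/I$ is free with basis $1,\alpha,\dots,\alpha^p$ by Theorem~\ref{t3.2}; $\psi^p_E$ is the universal degree-$p$ quotient (Atkin--Lehner) so $\psi^p_E(h)$ is the Hasse-invariant lift $\widetilde{h}$ of the quotient; and your verification that $(\alpha-p)(\alpha+(-1)^p)^p=\sum_i w_i\alpha^i$ reproduces the stated $w_i$ is correct, as is the mechanism you describe for the $d_{i,\tau}$ (iterated reduction of $\alpha^{p+1+k}$ modulo $w(h,\alpha)$, with $w_0^n$ tracking the constant terms released at each step).

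The genuine gap is the second paragraph, where all of the real content is hidden behind ``compute the quotient curve explicitly (for example via V\'elu-type formulas) and extract its Hasse invariant.'' This does not by itself produce a formula for $\widetilde{h}$ \emph{in terms of $h$ and $\alpha$}: what is needed is the explicit action of the Atkin--Lehner involution on the norm parameter, i.e.\ an identity expressing $w^*\alpha=\widetilde{\alpha}$ as a function of $\alpha$ (in Zhu's setup, essentially $\alpha\widetilde{\alpha}$ equals an explicit element built from $w_0=(-1)^{p+1}p$ and a unit), together with the fact that $\widetilde{h}$ satisfies $w(\widetilde{h},\widetilde{\alpha})=0$, so that $\widetilde{h}=\widetilde{\alpha}^{-1}(\widetilde{\alpha}-p)(\widetilde{\alpha}+(-1)^p)^p+p^2-(-1)^p$ can be rewritten as a polynomial of degree $\le p$ in $\alpha$. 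Establishing that identity is exactly where the choice of the preferred semistable model (Mod.$1^+$, with $\mathrm{Frob}^2=(-1)^{p-1}[p]$; see Remark~\ref{r1.2}) enters, and without it the coefficients $c_i$ are not pinned down --- indeed Remark 3.7 of this paper shows the answer genuinely changes under a different normalization, so no model-independent V\'elu computation can yield the stated constants. Your two consistency checks (Frobenius congruence and Atkin--Lehner duality) are good sanity tests but cannot substitute for this step, since many expressions of the form $h^p+p(\cdots)$ pass the first and the second is circular as stated.
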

To determine the image of $h\in F^0=W(\overline{\mathbb{F}}_p)(\!(h)\!)^\wedge_p$ under $\psi^p_F$, it suffices to determine the image of $\alpha$ in Theorem \ref{t3.2} under the map $t$. We have
\begin{equation*}
    \psi^p_F(h)=t\circ\psi^p_E(h)
\end{equation*}
by the diagram \ref{e3.1}. Since $t$ is an $E^0$ linear map, this requires us to find the solutions of $w(h,\alpha)$ in $F^0$.
\begin{prop}\label{p3.4}
    There is a unique solution $\alpha^*$ of $w(h,\alpha)$ in $W(\overline{\mathbb{F}}_p)(\!(h)\!)^\wedge_p$ with
    \begin{equation}
        \alpha^*=(-1)^{p+1}p\cdot h^{-1}+\left(1+(-1)^{p+1}\frac{p(p-1)}{2}\right)p^3\cdot h^{-3}+lower\ terms
    \end{equation}
    satisfies 
    \begin{equation*}
        w(h,\alpha)=(\alpha-p)(\alpha+(-1)^p)^p-(h-p^2+(-1)^p)\alpha=0.
    \end{equation*}
    Moreover, we have $\alpha^*=0$ mod $p$.
\end{prop}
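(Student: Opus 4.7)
The plan is to apply Hensel's lemma to the polynomial $w(h, \alpha) \in F^0[\alpha]$, where $F^0 = W(\overline{\mathbb{F}}_p)(\!(h)\!)^\wedge_p$ is a $p$-adically complete local ring with maximal ideal $(p)$ and residue field $\kappa = \overline{\mathbb{F}}_p(\!(h)\!)$. To verify the hypotheses, I would first reduce $w$ modulo $p$. Using the identity $(-1)^{p^2} = (-1)^p$ and the Frobenius dream $(\alpha + (-1)^p)^p \equiv \alpha^p + (-1)^p \pmod{p}$, the cross terms $\pm(-1)^p \alpha$ cancel and one obtains
\[ w(h, \alpha) \equiv \alpha(\alpha^p - h) \pmod{p}. \]
The residue-field roots are therefore $\alpha = 0$ and any $p$-th root of $h$. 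Since $\kappa$ is imperfect --- Frobenius sends $\sum_k a_k h^k$ to $\sum_k a_k^p h^{kp}$, whose exponents are all divisible by $p$ --- $h$ has no $p$-th root in $\kappa$, so $\alpha = 0$ is the unique residue-field root. In particular, any root $\alpha^* \in F^0$ must reduce to $0$ mod $p$, which already establishes the second claim of the proposition.

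Next I would check the Hensel condition at $\alpha = 0$. A direct differentiation and simplification, using $((-1)^p)^{p-1} = (-1)^{p(p-1)} = 1$ (since $p(p-1)$ is even), yields
\[ \partial_\alpha w(h, 0) = (-1)^{p^2} - p^2\bigl((-1)^p\bigr)^{p-1} - (h - p^2 + (-1)^p) = -h, \]
which is a unit in $F^0$. Combined with $w(h, 0) = -p \cdot (-1)^{p^2} = (-1)^{p+1} p \equiv 0 \pmod{p}$, Hensel's lemma produces a unique $\alpha^* \in F^0$ with $w(h, \alpha^*) = 0$ lifting $\alpha = 0$; together with the previous paragraph this gives uniqueness of the solution in $F^0$.

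To extract the explicit leading terms, I would substitute $\alpha^* = p\beta^*$ and rearrange the equation to
\[ (\beta^* - 1)(p\beta^* + (-1)^p)^p = \beta^*\bigl(h + (-1)^p - p^2\bigr). \]
Expanding $\beta^* = b_1 h^{-1} + b_2 h^{-2} + b_3 h^{-3} + \cdots$ and matching coefficients in $h^{-k}$ degree by degree, the lowest order forces $b_1 = (-1)^{p+1}$. After plugging this back, a short calculation shows the $h^{-1}$-coefficients on both sides coincide, forcing $b_2 = 0$. The $h^{-2}$-coefficient receives its leading contribution from the $k=2$ term $\binom{p}{2}(p\beta^*)^2 ((-1)^p)^{p-2}$ in the binomial expansion of $(p\beta^* + (-1)^p)^p$, producing $b_3 = p^2 + (-1)^{p+1} \tfrac{p^3(p-1)}{2}$. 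Multiplying by $p$ recovers exactly the displayed formula for $\alpha^*$.

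There is no serious conceptual obstacle: once the modular reduction is done, Hensel's lemma does all the heavy lifting and the expansion reduces to bookkeeping with the binomial theorem. The one point worth emphasizing is that the imperfectness of $\kappa = \overline{\mathbb{F}}_p(\!(h)\!)$ --- precisely the feature which makes the theory of augmented deformations in Section \ref{s2.3} genuinely richer than the classical Lubin--Tate picture --- is exactly what eliminates the other $p$ roots of $w$, each of which corresponds (via Theorem \ref{t2.12}) to a degree-$p$ subgroup of $\mathbb{G}_F$ whose quotient does not descend to an augmented deformation over $F^0$ itself.
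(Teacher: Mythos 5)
Your proof is correct and follows essentially the same route as the paper: both reduce $w$ modulo $p$ to $\alpha(\alpha^p-h)$, use that $h$ has no $p$-th root in the imperfect residue field $\overline{\mathbb{F}}_p(\!(h)\!)$ to isolate $\alpha\equiv 0$, and lift it — the paper's self-substitution recursion $\alpha=h^{-1}(\alpha^{p+1}+\cdots+w_2\alpha^2+w_0)$ is just the explicit form of the Hensel/Newton iteration you invoke, and it yields the same leading coefficients $(-1)^{p+1}p\,h^{-1}$ and $w_0^2w_2\,h^{-3}$ that your $\alpha^*=p\beta^*$ bookkeeping produces. If anything, your explicit check that $\partial_\alpha w(h,0)=-h$ is a unit makes the uniqueness step slightly more airtight than the paper's terse "therefore it also has a unique solution."
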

\begin{proof}
    We write $w(h,\alpha)$ as $w_{p+1}\alpha^{p+1}+w_p\alpha^p+\cdots+w_1\alpha+w_0$, where $w_{p+1}=1$, $w_1=-h$, $w_0=(-1)^{p+1}p$, and 
    \begin{equation*}
        w_i=(-1)^{p(p-i+1)}\left[\binom{p}{i-1}+(-1)^{p+1}p\binom{p}{i}\right]
    \end{equation*}
    for other coefficients.
    \par
    Since $h$ is invertible in $W(\overline{\mathbb{F}}_p)(\!(h)\!)^\wedge_p$, the equation $w(h,\alpha)=0$ implies
    \begin{align*}
        \alpha &=h^{-1}(\alpha^{p+1}+w_p\alpha^p+\cdots w_2\alpha^2+w_0)\\
        &=h^{-1}w_0 + \alpha^2(\alpha^{p-1}+w_p\alpha^{p-2}+\cdots+w_2)h^{-1}\\
        &=h^{-1}w_0 + h^{-3}w_0^2w_2 + lower\ terms
    \end{align*}
    Substituting the second equation into itself recursively gives the desired formula for $\alpha^*$ as described in .
    \par
    This iteration makes sense because the highest term of $\alpha^*$ is $h^{-1}w_0$ and $p|w_0$. Hence each substitution only create a lower terms, which is divided by a higher power of $p$, than current stage. Hence $\alpha^*=\Sigma_ka_kh^{-k}$ and the coefficient $a_k$ satisfies $\lim_{k\rightarrow\infty}|a_k|=0$, which implies $\alpha^*$ is indeed an element in $W(\overline{\mathbb{F}}_p)(\!(h)\!)^\wedge_p$.\par
    The uniqueness comes from the following observation. Note that $$w(h,\alpha)=\alpha(\alpha^p-h)\ {\rm mod}\ p.$$ This implies $w(h,\alpha)$ has only one solution $0$ in the residue field of $W(\overline{\mathbb{F}}_p)(\!(h)\!)^\wedge_p$. Therefore it also has a unique solution in $W(\overline{\mathbb{F}}_p)(\!(h)\!)^\wedge_p$, which is $\alpha^*$.
\end{proof}
\begin{theorem}\label{thm3.5}
    Let $F$ be a $K(1)$-local Morava $E$-theory at height 2. The total power operation $\psi^p_F$ on $F^0$ is determined by
    \begin{equation}\label{e3.5}
        \psi^p_F(h)=\alpha^*+\sum_{i=0}^p(\alpha^*)^i\sum_{\tau=1}^pw_{\tau+1}d_{i,\tau},
    \end{equation}
    where 
    \begin{equation*}
        \alpha^*=(-1)^{p+1}p\cdot h^{-1}+\left(1+(-1)^{p+1}\frac{p(p-1)}{2}\right)p^3\cdot h^{-3}+lower\ terms
    \end{equation*}
    is the unique solution of 
    \begin{equation*}
        w(h,\alpha)=(\alpha-p)(\alpha+(-1)^p)^p-(h-p^2+(-1)^p)\alpha
    \end{equation*}
    in $W(\overline{{\mathbb{F}}}_p)(\!(h)\!)^\wedge_p\cong F^0$.
    \par
    The other coefficients $w_i$ and $d_{i,\tau}$ are defined in Theorem \ref{t3.3}.
    \par
    In particular, $\psi^p_F$ satisfies the Frobenius congruence, i.e. $$\psi^p_F(h)\equiv h^p\ {\rm mod}\, p$$.
\end{theorem}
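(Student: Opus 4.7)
The strategy is to invoke the naturality square (\ref{e3.1}), which identifies $\psi^p_F(h)$ with the image of $\psi^p_E(h)$ under the horizontal arrow $t\colon E^0B\Sigma_p/I\to F^0B\Sigma_p/J=F^0$. The map $t$ is not merely $E^0$-linear but a ring homomorphism, since it is induced by the underlying ring-spectrum map $E\to F$ passed to the quotient by the transfer ideals, and the right-hand side collapses to $F^0$ because $\bar d(1,1)=1$. Given Theorem \ref{t3.3}, which expresses $\psi^p_E(h)$ as a polynomial in $\alpha$ with fixed coefficients $w_{\tau+1}d_{i,\tau}\in W(\overline{\mathbb{F}}_p)\subset F^0$, the whole computation of $\psi^p_F(h)$ reduces to pinning down the single element $t(\alpha)\in F^0$.

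To identify $t(\alpha)$, I would apply $t$ to the defining relation $w(h,\alpha)=0$ in $E^0B\Sigma_p/I$ from Theorem \ref{t3.2}. Since $t$ is a ring map and fixes the integer coefficients of $w(h,x)$, this yields $w(h,t(\alpha))=0$ in $F^0$, so that $t(\alpha)$ is a root of the polynomial $w(h,x)\in F^0[x]$. Proposition \ref{p3.4} furnishes a unique such root $\alpha^*$ in $F^0=W(\overline{\mathbb{F}}_p)(\!(h)\!)^{\wedge}_p$, forcing $t(\alpha)=\alpha^*$. Substituting this identification term-by-term into the formula of Theorem \ref{t3.3} produces equation (\ref{e3.5}), with $\alpha^*$ given by the iterative expansion recorded in Proposition \ref{p3.4}.

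For the Frobenius congruence $\psi^p_F(h)\equiv h^p\pmod p$, I would use the last sentence of Proposition \ref{p3.4}, namely $\alpha^*\equiv 0\pmod p$. Reducing (\ref{e3.5}) modulo $p$ then kills every term with $i\geq 1$ and leaves $\psi^p_F(h)\equiv \sum_{\tau=1}^p w_{\tau+1}d_{0,\tau}\pmod p$. To close the argument, note that $w(h,\alpha)\equiv \alpha(\alpha^p-h)\pmod p$ splits $\mathrm{Spec}((E^0B\Sigma_p/I)/p)$ into an étale component $\{\alpha=0\}$ and a Frobenius component $\{\alpha^p=h\}$; since $t$ reduces modulo $p$ to a map factoring through $\alpha^*\equiv 0$, it lands on the étale component, where the Ando--Hopkins--Strickland description of $\psi^p_E$ as the Frobenius lift on the étale quotient of the universal subgroup of $\mathbb{G}_E$ forces $h\mapsto h^p$. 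This yields the required identity $\sum_{\tau}w_{\tau+1}d_{0,\tau}\equiv h^p\pmod p$.

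The main obstacle is conceptual rather than computational: once $t$ is recognized as a ring homomorphism and Proposition \ref{p3.4} supplies uniqueness of $\alpha^*$, formula (\ref{e3.5}) is pure substitution. The subtlety worth emphasizing is that the uniqueness in Proposition \ref{p3.4} takes place in the complete but \emph{not} discretely valued ring $W(\overline{\mathbb{F}}_p)(\!(h)\!)^{\wedge}_p$, so the standard Hensel-type argument has to be run against the $p$-adic rather than the $h$-adic topology; once this is in hand, the identification $t(\alpha)=\alpha^*$ and the Frobenius congruence both follow cleanly.
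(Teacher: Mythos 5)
Your derivation of the main formula (\ref{e3.5}) is essentially the paper's: both run the naturality square (\ref{e3.1}), reduce everything to identifying $t(\alpha)$, and use the uniqueness statement of Proposition \ref{p3.4} to force $t(\alpha)=\alpha^*$. One point in your favor: you correctly insist that $t$ is a ring homomorphism (induced by $E\to F$ on $B\Sigma_p$-cohomology, compatible with transfers), whereas the paper only records that $t$ is $E^0$-linear; linearity alone would not let one conclude $t(\alpha^i)=t(\alpha)^i$ or that $t(\alpha)$ is a root of $w(h,x)$, so your stronger (and easily justified) claim is what the argument actually needs.

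Where you genuinely diverge is the Frobenius congruence. The paper proves $\sum_{\tau=1}^p w_{\tau+1}d_{0,\tau}\equiv h^p \pmod p$ by direct combinatorics: since $w_i\equiv 0 \pmod p$ for $i=0,2,\dots,p$ (only $w_1=-h$ and $w_{p+1}=1$ survive), the sum collapses to $d_{0,p}\equiv(-1)^p\sum_{m_1+\cdots+m_p=p}w_{m_1}\cdots w_{m_p}$, whose only nonzero term is $m_1=\cdots=m_p=1$, giving $(-1)^p w_1^p=h^p$. You instead argue structurally: $\bar t$ kills $\alpha$, hence factors through the component $\{\alpha=0\}$ of $\mathrm{Spec}\bigl((E^0B\Sigma_p/I)/p\bigr)$, which classifies the kernel of Frobenius, and there $\psi^p_E$ reduces to the mod-$p$ Frobenius $x\mapsto x^p$. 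This is a valid and arguably more illuminating route (it explains \emph{why} the congruence holds rather than verifying it), at the cost of importing the Ando--Hopkins--Strickland/Rezk identification of $\psi^p_E$ on that component as an external input; the paper's computation is self-contained given Theorem \ref{t3.3}. One terminological caution: the component $\{\alpha=0\}$ is indeed the one that is \'etale (in fact an isomorphism) \emph{over the base}, but the subgroup it classifies is the connected one, $\ker(\mathrm{Frob})$ --- your phrasing risks suggesting the opposite, though your conclusion $h\mapsto h^p$ is the right one.
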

\begin{proof}
    The formula \ref{e3.5} is obtained by assembling Theorem \ref{t3.3} and Proposition \ref{p3.4}. The last sentence comes from $\psi^p_F\equiv \sum_{\tau=1}^pw_{\tau+1}d_{0,\tau}$ {\rm mod} $p$, for $\alpha^*$ being zero after modulo $p$. Also notice that 
    \begin{equation*}
        w_i\equiv 0\ {\rm mod}\ p,\ i=0,2,\cdots, p.
    \end{equation*}
    Therefore
    \begin{align*}
        \psi^p_F(h)&\equiv\sum_{\tau=1}^pw_{\tau+1}d_{0,\tau}\equiv d_{0,p}\\
        &\equiv \sum_{n=0}^{p-1}(-1)^{p-n}w_0^n\sum\limits_{
        \substack{m_1+\cdots m_{p-n}=p\\ 1\leq m_s\leq p+1\\ m_{p-n}\geq 1}}w_{m_1}\cdots w_{m_{p-n}}\\
        &\equiv (-1)^p\sum\limits_{
        \substack{m_1+\cdots m_{p}=p\\ 1\leq m_s\leq p+1\\ m_{p}\geq 1}}w_{m_1}\cdots w_{m_{p}}.
    \end{align*}
    The only possibility in the last summation is $m_s=1$, hence
        $$\psi^p_F(h)\equiv(-1)^pw_1^p=(-1)^p(-h)^p=h^p\ {\rm mod}\ p$$.
\end{proof}
\begin{example}
    We calculate these formulas for small $p$.
    \par
    When $p=2$, we have
    \begin{equation*}
        \alpha^*=\frac{-2}{h}+\frac{-8}{h^4}+\frac{96}{h^{7}}+O(h^{-10})
    \end{equation*}
    and
    \begin{align*}
        \psi^2_F(h)&=h^2+\alpha^*-h\cdot (\alpha^*)^2\\
        &=h^2-\frac{6}{h}-\frac{40}{h^4}-\frac{544}{h^7}+O(h^{-10}).
    \end{align*}
    When $p=3$, we have
    \begin{equation*}
        \alpha^*=\frac{3}{h}+\frac{108}{h^3}-\frac{162}{h^4}+\frac{7857}{h^5}+O(h^{-6})
    \end{equation*}
    and
    \begin{equation*}
        \psi^3_F(h)=h^3-6h^2-96h+594-\frac{1158}{h}+\frac{14580}{h^2}+\ lower\ terms.
    \end{equation*}
\end{example}
\begin{remark}
    In the $p=3$ case, this power operation formula is different from which in \cite[Section 5.4]{zhu2014power}. This is because the equation for $\alpha$ in \cite{zhu2014power} is not of the form as \ref{e3.2}, but these two equations are equivalent \cite[Remark 2.25]{zhu2019semistable}. In the semi-stable model of Morava $E$-theory \cite[Definition 2.23, Mod.$1^+$]{zhu2019semistable}, it is required that ${\rm Frob}^2=(-1)^{p-1}[p]$, for instance, $[3]$ in this case. While in \cite{zhu2014power}, the model used is ${\rm Frob}^2=[-3]$.
\end{remark}
\begin{remark}
    The formula \ref{e3.5} relies on the $E_\infty$ structure on $F$. In our analysis, we equipped $F$ with the $E_\infty$ structure induced from $E$ via localization. However, $F$ itself may admit a different $E_\infty$ structure. See \cite[Section 6]{vankoughnett2021localizations}.
\end{remark}
\subsection{Connections with elliptic curves}\label{s3.2}

In this section, we state how these computations interact with elliptic curves and $p$-divisible groups.\par
Suppose $C$ is a supersingular elliptic curve over a perfect field $k$ with characteristic $p$. The formal group $\widehat{C}$ associated with $C$ is of height 2. Hence we can transport computations in topology to computations on elliptic curves. This is the initial idea of all explicit computations of height 2 Morava $E$ theories. Rezk calculates the $p=2$ case \cite{rezk2008power} and Zhu calculates the $p=3$ case \cite{zhu2014power}.\par
To be explicit, let $\mathscr{M}_N$ be the moduli stack of elliptic curves equipped with $\Gamma_1(N)$ structure, i.e. an $N$ torsion point. Over $\mathbb{Z}[1/N]$, the moduli problem of $[\Gamma_1(N)]$ is representable, i.e. $\mathscr{M}_N/\mathbb{Z}[1/n]$ is a scheme. Choose a supersingular locus on $\mathscr{M}_N$, we can produce a height 2 formal group as stated above. Since $C$ is supersinguar, the formal group $\widehat{C}$ equals to the $p$-divisible group $C[p^\infty]$ of $C$.
By this, a deformation of $\widehat{C}$ is the same as a deformation of $C[p^\infty]$, which is equivalent to a deformation of $C$ by the Serre-Tate's theorem \cite{tate1967p}. Hence we can construct a universal deformation $C_u$ of $C$ defined over certain ring $R$, with the formal group $\widehat{C_u}$ being the universal deformation of $\widehat{C}$. Then one constructs a corresponding Morava $E$ theory of height 2 associated with $C_u$ over $R$, denoted by $E$, via the Landweber exact functor theorem.\par
To calculate $E^0B\Sigma_p/I$, it suffices to find the place where the universal degree $p$ subgroup $K$ of $C_u$ is defined, for then $K$ is also the universal degree $p$ subgroup of $C_u[p^\infty]=\widehat{C_u}$. This procedure is feasible guaranteed by the moduli problem $\mathscr{M}_p$ is relative representable and hence the simultaneous moduli problem $[\Gamma_1(N)]\times[\Gamma_0(p)]$ is representable by a scheme $\mathscr{M}_{N,p}$ \cite{katz1985arithmetic}. In practice, one usually calculates the coordinates of a point of exact order $p$ to find the explicit expression of $E^0B\Sigma_p/I$ \cite{rezk2008power, zhu2014power}, though these calculations are somehow ad hoc for different primes $p$.
\begin{remark}
    Since in general, an elliptic curve will have $p+1$ subgroups of degree $p$. The moduli scheme $\mathscr{M}_{N,p}$ is of rank $p+1$ over $\mathscr{M}_N$, which coincides with the rank of $E^0B\Sigma_p/I$ over $E^0$.
\end{remark}
\begin{remark}
    Zhu identifies the parameter $\alpha$ which parametrizes subgroups with a modular form of level $[\Gamma_0(p)]$. He then computes the value of $\alpha$ at cusps of $\mathscr{M}_{N,p}$ and uses this to derived the general formula \ref{e3.2} of $E^0B\Sigma_p/I$ for arbitrary primes. \cite{zhu2019semistable}
\end{remark}
Recall that the total power operation $\psi^p_E:E^0\rightarrow E^0B\Sigma_p/I$ stands for taking the target of the universal deformation of Frobenius. It can also be viewed as taking the target curve of the universal degree $p$ isogeny as explained above.
\par
Let $\mathscr{C}_N$ be the universal curve of the moduli problem $[\Gamma_1(N)\times[\Gamma_0(N)]$ over $\mathscr{M}_{N,p}$. There is an isogeny $\Psi^p:\mathscr{C}_N\rightarrow\mathscr{C}_N/\mathscr{G}_N^{(p)}$, with $\mathscr{G}_N^{(p)}$ the universal degree $p$ subgroup of $\mathscr{C}_N$, $P_0$ the $N$ torsion point:
\begin{equation*}
    \left(\mathscr{C}_N,\ P_0,\ du,\ \mathscr{G}_N^{(p)}\right)\mapsto\left(\mathscr{C}_N/\mathscr{G}_N^{(p)},\ \Psi^p(P_0),\ d\Tilde{u},\ \mathscr{C}_N[p]/\mathscr{G}_N^{(p)} \right).
\end{equation*}
Hence it induces an exotic endomorphism of $\mathscr{M}_{N,p}$ \cite[Chapter 11]{katz1985arithmetic}, \cite[Section 2.3]{zhu2019semistable}, so called \textit{the\ Atkin\ Lehner\ involution}. For a supersingular elliptic curve $S$, this Atkin Lehner involution takes $S$ to itself. Therefore it restricts to an endomorphism of the formal neighborhood around the supersingular locus. The previous argument implies that the total power operation is
\begin{equation*}
    \psi^p_E:E^0\hookrightarrow E^0B\Sigma_p/I\xrightarrow{\omega}E^0B\Sigma_p/I,
\end{equation*}
where $\omega$ is the restriction of the Atkin Lehner involution to the formal neighborhood of the given supersingular locus. It is determined by $\psi^p_E(h)=\widetilde{h}$, where $\widetilde{h}$ is the image of $h$ under the Atkin Lehner involution. The calculations along these ideas can be found in \cite[Example 2.14]{zhu2020hecke}.\par
Over $F^0$, the $p$-divisible group $\mathbb{G}_E$ becomes an extension
\begin{equation*}
    0\rightarrow\mathbb{G}_F=\mathbb{G}_E^0\rightarrow\mathbb{G}_E\rightarrow\mathbb{Q}_p/\mathbb{Z}_p\rightarrow 0
\end{equation*}
where $\mathbb{G}_E^0$ is the connected component of $\mathbb{G}_E$ over $F^0$. Or equivalently
\begin{equation*}
    0\rightarrow\widehat{C_u} \rightarrow C_u[p^\infty] \rightarrow \mathbb{Q}_p/\mathbb{Z}_p \rightarrow 0
\end{equation*}
over $F^0$. The map $t:E^0B\Sigma_p/I\rightarrow F^0$ in \ref{e3.1} classifies a degree $p$ cyclic subgroup of $C_u$ over $F^0$. However, in this case, $C_u$ has only one cyclic subgroup of degree $p$, which coincides with the solution of $w(h,\alpha)$ in $F^0$ being unique, or equivalently, the map $t$ being the unique map from $E^0B\Sigma_p/I$ to $F^0$, as stated in Proposition \ref{p3.4}. Moreover, this subgroup is also the unique subgroup of degree $p$ of $\widehat{C_u}=\mathbb{G}_F$ over $F^0$.\par
Therefore, in the interpretation of elliptic curves, we can explain the diagram \ref{e3.1} as follow.
\begin{equation*}
    \begin{tikzcd}
        C_u \ar[r, mapsto, "\psi^p_E"] \ar[d, mapsto] & C_u/K \ar[d, mapsto, "t"] \\
        C_u' \ar[r, mapsto, "\psi^p_F"] & C_u'/H
    \end{tikzcd}
\end{equation*}
where $C_u'$ is the base change of $C_u$ over $F^0$, and $H$ is the degree $p$ cyclic subgroup of $C_u'$ explained above. The maps $\psi^p_E$ and $\psi^p_F$ take the target curves of degree $p$ isogenies starting from $C_u$ over $E^0B\Sigma_p/I$ and $F^0$ respectively. And the map $t$ transform $C_u$ to $C_u'$ and $K$ to $H$, hence it takes the curve $C_u/K$ to $C_u'/H$. The element $\psi^p_F(h)$ can be viewed as the Atkin Lehner involution $\widetilde{h}$ restricted over $F^0$.\par
In the interpretation of formal groups, we have
\begin{equation*}
    \begin{tikzcd}
        \mathbb{G}_E \ar[r, mapsto, "\psi^p_E"] \ar[d, mapsto] & (\psi^p_E)^*\mathbb{G}_E=\mathbb{G}_E/K \ar[d, mapsto, "t"]\\
        \mathbb{G}_F \ar[r, mapsto, "\psi^p_F"] & (\psi^p_F)^*\mathbb{G}_F=\mathbb{G}_F/H
    \end{tikzcd}
\end{equation*}
where $K$ is the universal degree $p$ subgroup of the formal group $\mathbb{G}_E$ and $H$ is the unique degree $p$ subgroup of $\mathbb{G}_F$. The groups $K$ and $H$ are the same thing as which appear in the interpretation of elliptic curves.
\begin{remark}
    Though the map $t$ takes the universal degree $p$ subgroup $K$ of $\mathbb{G}_E$ to the subgroup $H$ of $\mathbb{G}_F$, we can not conclude this from the Strickland's Theorem \cite[Theorem 10.1]{strickland1997finite} directly, due to the discontinuity of $t$.
\end{remark}
\section{Augmented Deformation Spectra}\label{s4}
Let $\mathbb{H}$ be any height $n-1$ formal group over $k(\!(u_{n-1})\!)$. In section \ref{1.3}, we have constructed the universal deformation $\mathbb{H}^u$ over the ring
\begin{equation*}
    F^0=W(k)(\!(u_{n-1})\!)^\wedge_p[\![u_1,\dots,u_{n-2}]\!],
\end{equation*}
and let $$F^*=F^0[\beta^\pm]$$ with $|\beta|=-2$.
The ring $F^*$ is Landweber exact, via the map
\begin{align*}
    MU^*&\rightarrow F^*\\
    x_{2p^i-2}&\mapsto u_iu^{n+1}.
\end{align*}
Hence we can construct a homotopy ring spectrum, called augmented deformation spectrum, denoted by $L_{\mathbb{H}}\in {\rm CAlg}(h{\rm Sp})$, which is complex oriented and carries the formal group $\mathbb{H}^u$.
\subsection{The underlying spectra are equivalent}
In this section we will show that the underlying spectra of $L_{\mathbb{H}}$ are independent of the choice of formal groups $\mathbb{H}$, which means they are all equivalent.\par
Let us recall what happens in Morava $E$-theories. Suppose the field $k$ is perfect, $\mathbb{F}_1$ and $\mathbb{F}_2$ are two formal groups over $k$. Then we have $\mathbb{F}_1$ and $\mathbb{F}_2$ are isomorphic over the algebraic closure $\overline{k}$ of $k$, and in fact, they are isomorphic over the separable closure of $k$.\par
Let $E(k,\mathbb{F}_1)$ and $E(k,\mathbb{F}_2)$ be the corresponding Morava $E$-theories. It has been shown in \cite{luecke2022arentmoravaetheories} that the underlying spectra of them are equivalent, but not as homotopy commutative ring spectra. Hence one can at least take $k$ to be an algebraically closed field, and talk about \textit{the} Morava $E$-theory of height $n$ over it. \par
While things are slightly different when consider formal groups over $k(\!(u)\!)$ even if $k$ is algebraically closed.
\begin{example}
    Let $H$ be the Honda formal group over $\overline{\mathbb{F}}_p(\!(u)\!)$ with its $p$ series given by
    \begin{equation*}
        [p]_H(x)=x^p,
    \end{equation*}
    and let $G$ be the special fiber of the base change of the universal deformation of the height 2 Honda formal group, which is defined over $\overline{\mathbb{F}}_p(\!(u)\!)$ with $p$ series
    \begin{equation*}
        [p]_G=ux^p+_Gx^{p^2}.
    \end{equation*}
    We claim that $H$ and $G$ are not isomorphic over $\overline{\mathbb{F}}_p(\!(u)\!)$.\par
    Suppose $\phi(t)=b_1t+\cdots$ is an isomorphism from $H$ to $G$, hence $b_1\neq 0$. We have
    \begin{align*}
        \phi([p]_H(x))&=[p]_G(\phi(x))\\
        \phi(x^p)&=u{\phi(x)}^p+_G{\phi(x)}^{p^2}.
    \end{align*}
    Calculating with mod $x^{p^2}$, we have
    \begin{align*}
        \sum_{i=1}^{p-1}b_ix^{ip}&=b_1x^p+\cdots+b_{p-1}x^{(p-1)p}\equiv u{\phi(x)}^p\\
        &\equiv ub_1^px^p+ub_2^px^{2p}+\cdots+ub_{p-1}^px^{(p-1)p}\\
        &\equiv \sum_{i=1}^{p-1}ub_i^px^{ip}\ \ {\rm mod}\, x^{p^2}.
    \end{align*}
    This implies that $b_i=ub_i^p$ in $\overline{\mathbb{F}}_p(\!(u)\!)$. But the equation $ux^{p}=x$ does not have a non-zero solution in $\overline{\mathbb{F}}_p(\!(u)\!)$ and $b_1\neq 0$ by the assumption.
\end{example}
\begin{example}
    Following the above notation, we know that the Honda formal group $H$ pocesses all its automorphism over $\mathbb{F}_p$, and in particular, over $\overline{\mathbb{F}}_p(\!(u)\!)$. On the other hand, by the same calculation, one can see that $G$ does not have all its automorphism over $\overline{\mathbb{F}}_p(\!(u)\!)$.
\end{example}
These examples illustrate that one can not talk about \textit{the} augmented deformation spectrum of a given height without specifying the formal group associated to it.\par
To show, despite of their ring structures, their underlying spectra are all equivalent, we need the following lemma.
\begin{lemma}[\cite{luecke2022arentmoravaetheories}, Lemma 7]\label{l4.3}
    Let $R$ be a commutative ring with two Landweber exact formal group laws $e,f:MU_*\rightarrow R$ and let $E$ and $F$ be the corresponding spectra. If there is a ring extension $u:R\rightarrow S$ which is split as an $R$-module map and over $S$, the formal groups $u\circ e$ and $u\circ f$ are isomorphic and Landweber exact, then $E$ and $F$ has the same homotopy type.
\end{lemma}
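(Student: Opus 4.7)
The plan is to realize $E$ and $F$ as retracts of Landweber-exact spectra over $S$, where by hypothesis they become equivalent. First I apply Landweber exactness to $u\circ e$ and $u\circ f$ to construct spectra $E_S$ and $F_S$ with $\pi_* E_S = \pi_* F_S = S$ carrying the respective formal group laws. Base change along the ring map $u: R\to S$ furnishes natural maps of spectra $\eta_E: E\to E_S$ and $\eta_F: F\to F_S$ realizing $u$ on homotopy groups, and the hypothesized isomorphism $\phi: u\circ e \cong u\circ f$ over $S$ yields, by Landweber functoriality, an equivalence $\Phi: E_S \xrightarrow{\simeq} F_S$ that induces the identity on $\pi_* = S$ (since $\phi$ is an isomorphism over the fixed base $S$, not a base-change isomorphism).

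To close the diagram, I use the $R$-module splitting $s: S\to R$ of $u$ to construct maps $\rho_E: E_S\to E$ and $\rho_F: F_S\to F$ in the reverse direction. Because $s$ is only an $R$-module map and not a ring map, Landweber functoriality does not apply directly. Instead, I observe that Landweber exactness gives
\begin{equation*}
    (E_S)_*(X) = MU_*(X) \otimes_{MU_*,\, u\circ e} S \;\cong\; E_*(X) \otimes_R S,
\end{equation*}
and the module map $s$ provides an $R$-linear natural transformation $E_*(X)\otimes_R S \xrightarrow{\mathrm{id}\otimes s} E_*(X)$ of homology theories on finite spectra, compatible with suspensions and boundary maps. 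By the Brown--Adams theorem representing natural transformations of homology theories by maps of spectra, this transformation is induced by a map $\rho_E: E_S \to E$ in the stable homotopy category; the same construction produces $\rho_F$. Since $s\circ u = \mathrm{id}_R$, the composite natural transformation corresponding to $\rho_E\circ\eta_E$ is the identity of $E_*(-)$, so $\rho_E\circ\eta_E = \mathrm{id}_E$, exhibiting $E$ as a retract of $E_S$, and symmetrically $F$ as a retract of $F_S$.

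Assembling these pieces, the composite
\begin{equation*}
    E \xrightarrow{\eta_E} E_S \xrightarrow{\Phi} F_S \xrightarrow{\rho_F} F
\end{equation*}
realizes $s\circ\mathrm{id}_S\circ u = \mathrm{id}_R$ on $\pi_*$, hence is an isomorphism on all homotopy groups and therefore a weak equivalence of spectra, giving $E\simeq F$ as underlying spectra.

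The main obstacle is the middle step: promoting the module-theoretic retraction $s$, which is invisible to the Landweber functor because it is not a ring map, into an honest map of spectra. This is what forces the detour through natural transformations of homology theories and Brown--Adams representability, and it requires checking that $\mathrm{id}\otimes s$ is genuinely compatible with the homology-theoretic structure. A secondary point to verify carefully is that $\Phi$ is the identity on $\pi_*$; this follows because $\phi$ is an isomorphism of formal group laws over the fixed ring $S$ and does not alter the ring of coefficients.
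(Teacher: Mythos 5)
The paper does not actually prove this lemma: it is quoted from \cite{luecke2022arentmoravaetheories} (Lemma 7) and used as a black box, so there is no in-paper proof to compare against. Your argument is correct and is the standard one for this statement: express everything as natural transformations of Landweber-exact homology theories ($\mathrm{id}\otimes u$, the isomorphism induced by $\phi$, and $\mathrm{id}\otimes s$), represent them by maps of spectra via Brown--Adams, and check that the composite $E\to E_S\to F_S\to F$ induces $s\circ u=\mathrm{id}_R$ on coefficients, hence is a $\pi_*$-isomorphism. The only small overstatement is the claim $\rho_E\circ\eta_E=\mathrm{id}_E$: Brown--Adams representability determines $\rho_E$ only up to phantom maps, so you may conclude that $\rho_E\circ\eta_E$ induces the identity on $E_*(-)$ (and is therefore a weak equivalence) but not that it is literally the identity; since your final step uses only the effect on $\pi_*$, this does not affect the conclusion.
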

\begin{theorem}
    Assume $k$ is a perfect field. Let $E$ and $F$ be two formal group laws of height $n-1$ over $k(\!(u_{n-1})\!)$. The corresponding augmented deformation spectra $L_E$ and $L_F$ have the same homotopy types.
\end{theorem}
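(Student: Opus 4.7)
The plan is to apply Lemma \ref{l4.3}. Set $R = F^0 = \Lambda[\![u_1, \ldots, u_{n-2}]\!]$ with $\Lambda = W(k)(\!(u_{n-1})\!)^\wedge_p$, and let $e_E, e_F \colon MU^* \to R$ denote the Landweber-exact classifying maps for the universal augmented deformations $\mathbb{H}_E^u$ and $\mathbb{H}_F^u$ of $E$ and $F$. The task reduces to producing a ring extension $u \colon R \to S$ that splits as an $R$-module, over which $u \circ e_E$ and $u \circ e_F$ become isomorphic and Landweber-exact formal group laws.

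Since $E$ and $F$ are formal groups of the same height $n - 1$ over $k(\!(u_{n-1})\!)$, they become isomorphic after base change to the algebraic closure. Pick a finite extension $K / k(\!(u_{n-1})\!)$ carrying an isomorphism $\phi \colon E_K \xrightarrow{\sim} F_K$, and let $\Lambda'$ be a Cohen ring of $K$. Because $K$ is finite over $k(\!(u_{n-1})\!)$, $\Lambda'$ is a finitely generated torsion-free module over the complete discrete valuation ring $\Lambda$, hence free of rank $[K : k(\!(u_{n-1})\!)]$. Setting
\[
S \;=\; R \otimes_\Lambda \Lambda' \;=\; \Lambda'[\![u_1, \ldots, u_{n-2}]\!]
\]
therefore gives a free --- in particular split --- $R$-module extension; flatness of $R \to S$ ensures that Landweber exactness of $e_E, e_F$ is preserved.

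The isomorphism of formal groups over $S$ is extracted from Theorem \ref{t2.11}. Both $\mathbb{H}_E^u \otimes_R S$ and $\mathbb{H}_F^u \otimes_R S$ are augmented deformations over $S$ of their special fibers $E_K$ and $F_K$, and $\phi$ identifies those special fibers. Via $\phi$, the pullback $\mathbb{H}_E^u \otimes_R S$ becomes an augmented deformation of $F_K$, classified by some continuous $\tau \colon R \to S$ and thereby isomorphic as a bare formal group to $\tau^* \mathbb{H}_F^u$; comparing with the structure map $R \to S$, which classifies $\mathbb{H}_F^u \otimes_R S$ itself, yields the required isomorphism $u \circ e_E \simeq u \circ e_F$ over $S$, and Lemma \ref{l4.3} then delivers the equivalence of the underlying spectra $L_E \simeq L_F$.

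I anticipate two main obstacles. First, the existence of a \emph{finite} extension $K$ carrying $\phi$: because $k(\!(u_{n-1})\!)$ is imperfect, one has to verify that the coefficients of the power-series isomorphism over the algebraic closure can collectively be taken in a single finite subextension. If this is too restrictive, the backup is to pass to a filtered colimit of finite subextensions $K_i$ and note that each $\Lambda_i$ is free over $\Lambda$, so that $\Lambda' = \mathrm{colim}\,\Lambda_i$ is at least flat and $R \to S$ is faithfully flat; one would then need to extend Lemma \ref{l4.3} correspondingly. Second --- and more subtle --- is that the comparison via $\phi$ identifies $\mathbb{H}_E^u \otimes_R S$ and $\mathbb{H}_F^u \otimes_R S$ as \emph{augmented} deformations of $E_K$ only up to an orbit of the $\mathrm{Aut}(E_K)$-action on classifying maps; one must argue that, on the level of bare formal groups, this residual automorphism is absorbed (possibly after enlarging $S$ further), so that the isomorphism required by Lemma \ref{l4.3} is genuinely an isomorphism of formal group laws rather than merely of augmented structures.
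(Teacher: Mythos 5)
Your proposal follows essentially the same route as the paper: reduce to Lemma \ref{l4.3} by base-changing along a split ring extension $R\to S$ obtained from a Cohen ring of a field extension over which the two formal groups become isomorphic. The one step of your primary plan that fails is the hope for a \emph{finite} extension $K$: an isomorphism $\Phi=\sum\Phi_i x^{p^i}+\cdots$ of formal group laws has infinitely many coefficients, and the field $k(\!(u_{n-1})\!)(\Phi_0,\Phi_1,\dots)$ they generate is in general an infinite extension (a union of finite Galois extensions $L_i$, cf.\ Torii); so your ``backup'' is in fact the paper's actual argument. The paper realizes the Cohen ring of $L=\varinjlim L_i$ as the $p$-adic completion of a colimit of finite \'etale $\Lambda$-algebras $B(i)$, and --- contrary to your worry --- no strengthening of Lemma \ref{l4.3} is needed: the lemma only requires that $R\to S$ split as $R$-modules, and this splitting survives both the colimit (each $B(i)$ being $\Lambda$-free) and the subsequent $p$-adic completion. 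Your second concern, that $\phi$ identifies the two pullbacks only as augmented deformations up to a residual automorphism of the special fiber, so that some argument is needed to produce an isomorphism of bare formal group laws over the single map $u$, is a legitimate subtlety; the paper does not address it either, asserting only that $u^*\widetilde F\cong u^*\widetilde H$ ``because they are deformations of two isomorphic formal group laws,'' so on this point you are being more careful than the source.
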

\begin{proof}
    We may assume $E=H$ is the height $n-1$ Honda formal group law and $F$ is $p$-typical. We have
    \begin{equation*}
        L_H^*=L_F^*=W(k)(\!(u_{n-1})\!)^\wedge_p[\![u_1,\dots,u_{n-2}]\!][\beta^\pm].
    \end{equation*}
    Fix a separable closure of $k(\!(u_{n-1})\!)$, and an isomorphism
    \begin{equation*}
        \Phi(X)=\sum_{i\geq 0}\Phi_1X+_H\Phi_2X^{p^2}+_H+\cdots
    \end{equation*}
    from $F$ to $H$. Let $L$ denote the field $k(\!(u_{n-1})\!)(\Phi_0,\Phi_1,\dots)$, and $$L_n=k(\!(u_{n-1})\!)(\Phi_0,\Phi_1,\dots,\Phi_n).$$
    Therefore $L=\underset{\rightarrow}{\lim}L_i$ and $F$ and $H$ are isomorphic over $L$. \par
    Each $L_i$ is a finite Galois extension of $k(\!(u_{n-1})\!)$ \cite[Section 2.3]{torii2003degeneration}. By \cite[Proposition 4.4]{milne1980etale} or \cite{torii2010comparison}, there is a sequence of finite \'etale $W(k)(\!(u_{n-1})\!)^\wedge_p$ algebras
    \begin{equation*}
        W(k)(\!(u_{n-1})\!)^\wedge_p=B(-1)\rightarrow B(0)\rightarrow\cdots.
    \end{equation*}
    Note that each $B(i)$ is a free $W(k)(\!(u_{n-1})\!)^\wedge_p$ module, so does their limit $B(\infty)$. We have the map $W(k)(\!(u_{n-1})\!)^\wedge_p\rightarrow B(\infty)$ splits as $W(k)(\!(u_{n-1})\!)^\wedge_p$-modules. Since this is a $\mathbb{Z}_p$ module map, this splitting property extends to the $p$-adic completion, $B(\infty)^\wedge_p$, denoted by $B$. By \cite[Lemma 4.2]{torii2010comparison}, the ring $B$ is a Cohen ring of its residue field $L$.\par
    Now Let 
    \begin{align*}
        R=L_H^*=L_F^*&=W(k)(\!(u_{n-1})\!)^\wedge_p[\![u_1,\dots,u_{n-2}]\!],[\beta^\pm]\\
        S&=B[\![u_1,\dots,u_{n-2}]\!][\beta^\pm].
    \end{align*}
    The map $u:R\rightarrow S$ sends $u_i$ to $u_i$ and $\beta$ to $\beta$, which splits as an $R$-module morphism as explained above. Let $\widetilde{F}$ and $\widetilde{H}$ be the universal augmented deformations of $F$ and $H$ respectively, over $R$. It is clear that $u^*\widetilde{F}$ is isomorphic to $u^*\widetilde{H}$ over $S$, because they are deformations of two isomorphic formal group laws. The conclusion follows from the Lemma \ref{l4.3}.
\end{proof}
\bibliography{main}

\end{document}